 \newcommand{\lowerromannumeral}[1]{\romannumeral#1\relax}
\theoremstyle{plain}
\newtheorem{thmx}{Theorem}
\renewcommand{\thethmx}{\Alph{thmx}} 
\newtheorem{thm}{Theorem}[section]  
\newtheorem{lem}[thm]{Lemma}
\newtheorem{prop}[thm]{Proposition}
\newtheorem{claim}[thm]{Claim}
\theoremstyle{definition}
\newtheorem{dfn}[thm]{Definition}
\theoremstyle{remark}
\newtheorem{rem}[thm]{Remark}
\numberwithin{equation}{subsection}  
\theoremstyle{plain}
\newlist{thmlist}{enumerate}{1}
\setlist[thmlist]{wide = 0pt, labelwidth = 2em, labelsep*=0em, itemindent = 0pt, leftmargin = \dimexpr\labelwidth + \labelsep\relax, noitemsep,topsep = 1ex, font=\normalfont, label=(\roman*), ref=\thethm.(\roman{thmlisti})}
\newlist{thmenum}{enumerate}{1} % also creates a counter called 'propenumi'
\setlist[thmenum]{wide = 0pt, labelwidth = 2em, labelsep*=0em, itemindent = 0pt, leftmargin = \dimexpr\labelwidth + \labelsep\relax, noitemsep,topsep = 1ex, font=\normalfont, label=(\roman*), ref=\thethmx.(\roman{thmenumi})}%{label=\alph*), ref=\thethmx~(\alph*)}
\newsavebox{\@brx}
\newcommand{\llangle}[1][]{\savebox{\@brx}{\(\m@th{#1\langle}\)}%
	\mathopen{\copy\@brx\kern-0.5\wd\@brx\usebox{\@brx}}}
\newcommand{\rrangle}[1][]{\savebox{\@brx}{\(\m@th{#1\rangle}\)}%
	\mathclose{\copy\@brx\kern-0.5\wd\@brx\usebox{\@brx}}}
\crefname{lem}{Lemma}{Lemmas}
\crefname{thm}{Theorem}{Theorems}
\crefname{proposition}{Proposition}{Propositions}
\crefname{dfn}{Definition}{Definitions}
\crefname{rem}{Remark}{Remarks}
\crefname{cor}{Corollary}{Corollaries}
\crefname{corx}{Corollary}{Corollaries}
\crefname{problem}{Problem}{Problems}
\crefname{thmx}{Theorem}{Theorems}
\crefname{claim}{Claim}{Claims}
\crefname{assumption}{Assumption}{Assumptions}
\crefname{main}{Main Theorem}{Main Theorems}
\newcommand*{\rom}[1]{\expandafter\@slowromancap\romannumeral #1@}
\newcommand{\crefnames}[3]{%
	\@for\next:=#1\do{%
		\expandafter\crefname\expandafter{\next}{#2}{#3}%
	}%
}
\newcommand{\sE}{\mathscr{E}}
\newcommand{\sF}{\mathscr{F}}
\newcommand{\sL}{\mathscr{L}}
\newcommand{\sP}{\mathscr{P}}
\newcommand{\cO}{\mathcal O}
\newcommand{\bC}{\mathbb{C}}
\newcommand{\bD}{\mathbb{D}}
\newcommand{\bP}{\mathbb{P}}
 \def\d{\partial} 
\def\hess{\sqrt{-1}\partial\overline{\partial}}
\def\sn{\sqrt{-1}}
\begin{document} 
	
	\title[Big Picard  type   theorem]{Big Picard    theorem  for moduli \\ spaces of polarized manifolds}

	\author{Ya Deng} 
	%\thanks{This work is supported by IH\'ES}
 \address{
		Institut des Hautes Études Scientifiques,
		Universit\'e  Paris-Saclay, 35 route de Chartres, 91440, Bures-sur-Yvette, France}
	\email{deng@ihes.fr}
	
	\urladdr{https://www.ihes.fr/~deng} 
	\date{\today} 
	\begin{abstract} 
		Consider a smooth projective family of complex polarized  manifolds with semi-ample canonical sheaf over a quasi-projective manifold $V$. When the associated moduli map $V\to P_h$ from the base to  coarse moduli space is quasi-finite, we prove that     the generalized big Picard   theorem holds for the base manifold $V$: for any projective compactification $Y$ of $V$, any holomorphic map $f:\Delta-\{0\}\to V$ from the punctured unit disk to $V$ extends to a holomorphic map  of the unit disk $\Delta$ into $Y$.  This result generalizes our previous work on the Brody hyperbolicity of $V$ (\emph{i.e.} there are no entire curves on $V$), as well as  a more recent work by Lu-Sun-Zuo on the Borel hyperbolicity of $V$ (\emph{i.e.} any holomorphic map from a quasi-projective variety to $V$ is algebraic). We also obtain   generalized big Picard   theorem for bases of log Calabi-Yau families.
\end{abstract} 
\subjclass[2010]{32Q45, 32A22, 53C60}
\keywords{big Picard theorem, logarithmic derivative lemma, Viehweg-Zuo Higgs bundles, negatively curved Finsler metric, moduli of polarized manifolds, log Calabi-Yau family}
	\maketitle
	
\tableofcontents
\section{Introduction} 
The classical big Picard theorem says that any holomorphic map  from
the punctured disk $\Delta^*$  into $\bP^1$
which omits three points can be extended to a
holomorphic map $\Delta\to \bP^1$, where $\Delta$ denotes the unit disk. %By the work of Kobayashi et  al., we know that big Picard theorem is related to hyperbolicity. Indeed, 
Therefore, we say the (generalized) big Picard theorem holds for  a quasi-projective variety $V$     if for some (thus any) projective compactification $Y$ of $V$, any holomorphic map $f:\Delta^*\to V$ extends to a holomorphic map $\bar{f}:\Delta\to X$.  This property is interesting for it implies the \emph{Borel hyperbolicity}\footnote{The notion of Borel hyperbolicity was first introduced by Javanpeykar-Kucharczyk in \cite{JK18}.} of $V$: any holomorphic map from a quasi-projective variety to $V$ is necessarily \emph{algebraic}. 
A natural question is to find algebraic varieties satisfying the big Picard theorem.  By the fundamental work of Kobayashi (see \cite[Theorem 6.3.6]{Kob98}), big Picard theorem holds for the quasi-projective manifold $V$ which admits a projective compactification $Y$ such that $V$ is \emph{hyperbolically embedded into} $Y$ (see \cite[Chapter 3. \S3]{Kob98} for the definition). This gives us an important criteria for varieties satisfying the big Picard theorem. By the work of Fujimoto \cite{Fuj72} and Green \cite{Gre77},  complements of $2n+1$ general hyperplanes in $\bP^n$ are hyperbolically embedded into $\bP^n$. More recently Brotbek and the author \cite{BD19} proved that the complement  of a general hypersurface  in $\bP^n$ of high degree is also hyperbolically embedded into $\bP^n$. By A. Borel \cite{Bor72} and Kobayashi-Ochiai \cite{KO71}, the quotients of bounded symmetric domains by torsion free arithmetic groups are hyperbolically embedded into its Baily-Borel compactification. Hence these results provide  examples of quasi-projective manifolds satisfying the big Picard theorem.

However, quasi-projective manifold $V$ being hyperbolically embedded into some projective compactification $Y$ is \emph{minimal} in the sense that, for any birational modification $\mu:\tilde{Y}\to Y$ by blowing-up the boundary $Y\setminus V$, $\mu^{-1}(V)\simeq V$ is no more hyperbolically embedded into $\tilde{Y}$, while the  big Picard theorem does not depends on its compactification. The first result in this paper is to establish a more flexible criteria for big Picard theorem. For our definition of Finsler metric (which is slightly different from the usual one in the literature), see \cref{def:Finsler}. 
\begin{thmx}[=\cref{thm:Big Picard}]\label{main1}
Let $X$ be a projective manifold   and let $D$ be a simple normal crossing divisor on $X$. Let $h$ be a (possibly degenerate) Finsler metric of $T_X(-\log D)$.  Assume that $f:\Delta^*\to X\setminus D$  is a  holomorphic map from the punctured unit disk $\Delta^*$ to $X\setminus D$  such that $  |f'(t)|_h^2\not\equiv 0$, and 
\begin{align*} 
 \hess \log |f'(t)|_h^2\geq f^*\omega
\end{align*}
for some smooth K\"ahler metric $\omega$ on $X$. Then $f$ extends to a holomorphic map $\bar{f}:\Delta\to X$ of the unit disk into $X$.
\end{thmx}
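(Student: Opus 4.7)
The plan is to reduce the extension of $f$ to a finite-area statement near the puncture, and then to use the curvature hypothesis as an Ahlfors--Schwarz-type condition to establish this area bound. Let $\mu:=|f'(t)|^2_h$, and let $\omega_P=p(t)\sqrt{-1}\,dt\wedge d\bar t$ with $p(t)=|t|^{-2}(\log|t|^2)^{-2}$ denote the Poincar\'e metric on $\Delta^*$, whose Gaussian curvature is a negative constant and whose volume on $\{0<|t|<r\}$ is finite for every $r<1$.

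The first step is to reduce the conclusion to showing
$$\int_{\{0<|t|<r\}} f^*\omega<\infty \quad \text{for some } r<1.$$
Granting this, classical extension results for holomorphic maps of locally finite area into projective manifolds (applied, for instance, to the closure of the graph of $f$ in $\Delta\times X$, which is an analytic subvariety of $\Delta^*\times X$ with locally finite Hausdorff $2$-measure near $\{0\}\times X$) yield the holomorphic extension $\bar f\colon \Delta\to X$.

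The heart of the proof is the Ahlfors--Schwarz-type bound
$$f^*\omega\leq C\cdot \omega_P \quad \text{on } \Delta^*$$
for some constant $C>0$. A direct computation gives $\sqrt{-1}\partial\bar\partial\log p=2\omega_P$, so, writing $\psi:=\log(\mu/p)=\log\mu-\log p$, the curvature hypothesis yields
$$\sqrt{-1}\partial\bar\partial\psi\geq f^*\omega-2\omega_P.$$
A maximum-principle analysis of $\psi$ on $\Delta^*$---carried out either directly, by controlling the behavior of $\psi$ as $|t|\to 0$ and $|t|\to 1$, or by lifting to the universal cover $\mathbb{H}\to \Delta^*$ and invoking Yau's generalized Schwarz lemma on $\mathbb{H}$---is expected to deliver the pointwise bound. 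Once $f^*\omega\leq C\omega_P$ is known, the finite Poincar\'e volume near the puncture immediately gives the required finite area, and hence the extension by the reduction above.

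The main obstacle is precisely the Ahlfors--Schwarz step. The classical version of the lemma relates the Ricci form of a pseudo-metric to the metric itself, whereas here $\sqrt{-1}\partial\bar\partial\log\mu$ is bounded below by $f^*\omega$, a genuinely different form. Moreover, because $h$ is only a \emph{possibly degenerate} Finsler metric, $\mu$ may vanish or blow up near the puncture, so $\psi$ need not extend boundedly across $|t|=0$. Overcoming this---most plausibly via a Brody-type rescaling procedure that extracts a limit holomorphic disc or entire curve on which a contradiction is derived from the hypothesis, or via a cut-off/exhaustion scheme controlling $\psi$ on annular regions and passing to a limit---is the essential analytic work required.
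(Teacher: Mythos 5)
Your overall strategy (reduce extension to a growth estimate near the puncture, then deduce that estimate from the curvature hypothesis) is reasonable in outline, but the key analytic step is genuinely missing, as you yourself acknowledge, and I do not believe the Ahlfors--Schwarz route can be repaired to close the gap. The hypothesis $\hess\log|f'|_h^2\geq f^*\omega$ is \emph{not} a curvature bound of Ahlfors--Schwarz type: the right-hand side $f^*\omega$ is a ``third-party'' form that is not comparable to $\mu:=|f'|_h^2$. Indeed $\omega$ is a K\"ahler metric on $T_X$, so near $D$ the restriction of $\omega$ to $T_X(-\log D)$ degenerates (e.g.\ $|z\partial_z|^2_\omega\sim|z|^2\to 0$), whereas a nondegenerate Finsler metric $h$ on $T_X(-\log D)$ keeps the norm of $z\partial_z$ bounded below; hence one cannot deduce $f^*\omega\geq c\,\mu\sqrt{-1}\,dt\wedge d\bar t$ and thereby reduce to a self-referencing differential inequality. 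Setting $\psi=\log(\mu/p)$ gives $\hess\psi\geq f^*\omega-2\omega_P$, but the maximum principle produces nothing here, since at an interior maximum of $\psi$ one only learns that $f^*\omega\leq 2\omega_P$ \emph{at that point}, with no control elsewhere; and lifting to $\mathbb H$ and invoking Yau's Schwarz lemma also requires the self-referencing curvature hypothesis that is absent. The Brody-rescaling fallback is equally problematic, since the hypothesis of Theorem~A by itself does not yield an Ahlfors--Schwarz-type estimate that would rule out entire curves; that stronger estimate, used for Brody hyperbolicity in \cite{Den18b}, is a different consequence of the Viehweg--Zuo Higgs bundle and is not implied by the abstract assumption of Theorem~A.

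The paper's proof takes a Nevanlinna-theoretic route that is precisely designed to digest a hypothesis of this shape. Rather than aim at a pointwise bound $f^*\omega\leq C\omega_P$ (which is stronger than needed), it aims for the weaker growth bound $T_{f,\omega}(r)=O(\log r)$ on the order function, which by \cref{lem:criteria} already guarantees extension across the puncture. The curvature hypothesis is fed into Jensen's formula \eqref{eq:Jensen} to convert $T_{f,\omega}(r)$ into a proximity-type integral $\frac{1}{2\pi}\int_0^{2\pi}\log^+|f'(re^{i\theta})|_h\,d\theta$, which is then split using an affine cover and a partition of unity; Noguchi's logarithmic derivative lemma (\cref{lem:Noguchi}) controls the terms involving $(\log f_{\alpha i})'$, and E.~Borel's growth lemma (\cref{lem:Borel}) controls the remaining terms. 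Assembling these yields the self-improving inequality $T_{f,\omega}(r)\leq C(\log^+ T_{f,\omega}(r)+\log r)+O(1)$ outside a set of finite measure, which forces $T_{f,\omega}(r)=O(\log r)$. So the averaging built into Nevanlinna theory is exactly what replaces the missing pointwise maximum-principle step in your approach, and without some such averaged or integrated argument the gap you identify remains.
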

\cref{main1} is inspired by   \emph{fundamental vanishing theorem} for  jet differentials vanishing on some ample divisor by Siu-Yeung \cite{S-Y97} and Demailly \cite[\S 4]{Dem97b}, and its proof is   mainly based on a \emph{logarithmic derivative lemma} by Noguchi \cite{Nog81}.

The motivation of \cref{main1} is to study the hyperbolicity of moduli spaces of polarized manifolds with semi-ample canonical bundle. By the fundamental work of Viehweg-Zuo \cite{VZ02,VZ03}, and the recent development by Popa et al. \cite{PS17,PTW18} and \cite{Den18,Den18b,Den19,LSZ19}, a special Higgs bundle, the so-called Viehweg-Zuo Higgs bundle  in \cref{def:VZ} below, turns out to be a powerful technique in studying hyperbolicity problems.  For a quasi-projective manifold $V$  equipped with a Viehweg-Zuo Higgs bundle, in  \cite{Den18,Den18b} we prove  that $V$ can be equipped with  a \emph{generically positively definite} Finsler metric whose holomorphic sectional curvature is bounded from above by a negative constant. In particular, we prove that $V$ is always \emph{pseudo Kobayashi hyperbolic}. The second aim of this article is to give a new curvature estimate for that Finsler metric on $V$.
\begin{thmx}[=\cref{thm:uniform}]\label{main2}
	Let $X$ be a projective manifold  equipped with a smooth K\"ahler metric  $\omega$ and let $D$ be a simple normal crossing divisor on $X$. Assume that there is a Viehweg-Zuo Higgs bundle over $(X,D)$. Then there are a positive constant $\delta$ and a Finsler metric $h$ on $T_X(-\log D)$ which is positively definite on a dense Zariski open set $V^\circ$ of $V:=X\setminus D$,   such that   for any holomorphic map $\gamma:C\to V$  from  any open subset $C$ of $ \bC$ to $V$ with $\gamma(C)\cap V^\circ\neq \varnothing$, one has
	$$
	\hess \log |\gamma'|_h^2\geq \delta\gamma^*\omega.
	$$ 
	In particular, by \cref{main1},	   for any holomorphic map   $f:\Delta^*\to X\setminus D$, with $f(\Delta^*)\cap V^\circ\neq \varnothing$,  it extends to a holomorphic map $\bar{f}:\Delta\to X$.
\end{thmx}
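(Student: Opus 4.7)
The plan is to construct $h$ by refining the Finsler metric built in \cite{Den18,Den18b} from a Viehweg-Zuo Higgs bundle. Given $(E,\theta)=\bigoplus E^{p,q}$ on $(X,D)$ with a big line subsheaf $A\hookrightarrow E^{n,0}$, iteration of the Higgs field yields, for each $k\geq 1$, a morphism
$$\tau_k:S^{k}T_X(-\log D)\longrightarrow \sHom(A, E^{n-k,k}).$$
Endowing $E$ with its Hodge-type Hermitian metric $H$ (smooth on $V$, with mild singularities along $D$), one obtains pseudo-norms $|v|_{h_k}^{2k}:=|\tau_k(v^{\otimes k})|_H^2$ on $T_X(-\log D)$. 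Setting $h:=\sum_{k=1}^{n}\alpha_k h_k$ for positive weights $\alpha_k$ to be chosen, $h$ is positively definite on the Zariski open $V^\circ\subseteq V$ where $\tau_1$ is injective; such a locus is dense by the VZ construction.

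For a holomorphic $\gamma:C\to V$ with $\gamma(C)\cap V^\circ\neq \varnothing$, I would compute $\hess \log|\gamma'|_h^2$ on $\gamma^{-1}(V^\circ)\cap\{|\gamma'|_h^2>0\}$. By Poincar\'e--Lelong applied to each section $\tau_k(\gamma'^{\otimes k})$ of $\gamma^*\sHom(A,E^{n-k,k})$ together with Griffiths's curvature formula for the Hodge metric, one obtains a lower bound that splits into: (i) a positive term coming from the Griffiths-type curvature of $E^{n-k,k}$, (ii) a term comparable to $|\gamma'|_{h_{k+1}}^{2}/|\gamma'|_{h_k}^{2}$ arising from the iterated Higgs field advancing to the next level, and (iii) a contribution $-\gamma^*\iTh_{h_A}(A)$ from the curvature of $A^*$ in $\sHom(A,\cdot)$. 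Following the book-keeping of \cite{Den18}, the weights $\alpha_k$ are chosen inductively (with $\alpha_{k+1}\gg\alpha_k$) so that the negative cross-terms at level $k$ are absorbed by the positive term of $h_{k+1}$ at the next level, yielding an overall bound of the form $\hess\log|\gamma'|_h^2 \geq c\,\gamma^*\iTh_{h_A}(A)$ on the said open set.

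The new input that produces a uniform $\delta\gamma^*\omega$ is the bigness of $A$: it furnishes a singular Hermitian metric $h_A$ on $A$ with $\iTh_{h_A}(A)\geq \omega$ as currents on a Zariski open of $X$; shrinking $V^\circ$ if necessary, one may assume the inequality holds over $V^\circ$. Substituting gives
$$\hess\log|\gamma'|_h^2\geq \delta\,\gamma^*\omega$$
on $\gamma^{-1}(V^\circ)\cap\{|\gamma'|_h^2>0\}$, with $\delta$ independent of $\gamma$ precisely because the $\gamma^*\omega$-term is not weighted by $|\gamma'|_h^2$. The inequality extends across the exceptional locus $\gamma^{-1}(V\setminus V^\circ)\cup\{|\gamma'|_h^2=0\}$ as currents on $C$, since $\log|\gamma'|_h^2$ is locally the logarithm of a sum of squared moduli of holomorphic sections and is not identically $-\infty$ by $\gamma(C)\cap V^\circ\neq\varnothing$. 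The final extension statement for $f:\Delta^{*}\to V$ then follows immediately by applying \cref{main1} to $\gamma=f$.

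The main obstacle is the curvature bookkeeping in step two: verifying that the curvature cross-terms at intermediate levels of the iterated Higgs field do not overwhelm the $\gamma^*\omega$ contribution, and that the weight choice producing cancellation is compatible with preserving a \emph{uniform} residual $\delta\gamma^*\omega$. This requires the inductive weight selection together with Cauchy--Schwarz-type estimates on $D''$-derivatives of the $\tau_k(\gamma'^{\otimes k})$, as well as a careful treatment of the logarithmic poles along $D$ so that the singularities of the Hodge metric do not invalidate the global current inequality.
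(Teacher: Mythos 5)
Your overall strategy matches the paper's: iterate the Higgs field to get $\tau_k$, build Finsler pseudo-metrics $F_k$ from a Hodge-type metric, take a weighted combination, and convert the curvature of the big line bundle into the uniform $\delta\gamma^*\omega$. However, there is a genuine gap in the curvature bookkeeping that makes your weight-absorption argument incomplete.

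When one applies Schumacher's inequality (the paper's Lemma~2.6) to the convex sum $\sum k\alpha_k G_k$ using the curvature formulas for $\log G_k$, the telescoping of the $G_k^{k+1}/G_{k-1}^{k-1} - G_{k+1}^{k+1}/G_k^k$ cross-terms leaves an unresolved \emph{negative} residual of the form $-\tfrac{\alpha_1^3}{\alpha_2^2}G_1^2$. There is no ``level $0$'' to push this onto, so no choice of weights $\alpha_{k+1}\gg\alpha_k$ alone can eliminate it. The paper resolves this via an additional, genuinely new estimate (its Claim~2.9): the curvature term satisfies a \emph{pointwise} lower bound $\Theta_{L,h_L}(\partial_t,\bar\partial_t)\geq c_0\,G_1(t)$ for a constant $c_0>0$ independent of $\gamma$, which is proven by comparing the singular metric $h_L$ (built in \cite{PTW18} with the $(r_D r_S)^\alpha$ weights) against the pullback Finsler metric $\tau_1^*(r_D^2 h)$ on the compact $Y$. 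This estimate is what both kills the residual $-\tfrac{\alpha_1^3}{\alpha_2^2}G_1^2$ term \emph{and} makes the weights $\alpha_k$ universal. Your argument never supplies an analog of this: you only use the weaker statement $\iTh_{h_A}(A)\geq\omega$ and a qualitative claim that cross-terms cascade upward, which leaves the lowest-level negativity uncontrolled and gives no reason for $\delta$ to be independent of $\gamma$.

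Two further points. First, ``shrinking $V^\circ$ if necessary'' to enforce $\iTh_{h_A}(A)\geq\omega$ over $V^\circ$ is not a valid move: the final inequality must hold as a distribution on all of $C$, and this requires control of $h$ near $D$ --- specifically that $h$ is a genuine (continuous, locally bounded) Finsler metric on $T_X(-\log D)$, which the paper obtains from the explicit $r_D^{-2}$-weighted construction in its Proposition~2.2, not from shrinking the open set. The extension across $\gamma^{-1}(V\setminus V^\circ)$ then uses local boundedness above of $|\gamma'|_h^2$, not merely that $\log|\gamma'|_h^2\not\equiv-\infty$. Second, minor but worth fixing: the Poincar\'e--Lelong contribution from $\sHom(A,E^{n-k,k})=A^{-1}\otimes E^{n-k,k}$ enters $\hess\log|\tau_k(\gamma'^{\otimes k})|^2$ with a \emph{plus} sign $+\gamma^*\iTh_{h_A}(A)$ (it is $-\Theta(A^{-1})$), consistent with the paper's $+\Theta_{L,h_L}(L)$ term; you wrote it with a minus sign in step~(iii).
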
 
By the work \cite{VZ02,VZ03,PTW18},  Viehweg-Zuo Higgs bundles exist  on bases of maximally varying, smooth family of projective manifolds with semi-ample canonical bundle. Combining their results and \cref{main2}, we prove the big Picard theorem for moduli of polarized manifolds with semi-ample canonical bundle.
\begin{thmx}\label{main}
	Consider the moduli functor $\mathscr{P}_h$ of polarized manifolds with semi-ample canonical sheaf  introduced by Viehweg \cite[\S 7.6]{Vie95}, where $h$ is the  Hilbert polynomial associated to the polarization.  Assume that for some quasi-projective manifold $V$ over which there exists
	a smooth polarized family $(f_U:U\to V,\sL)\in \mathscr{P}_h(V)$ such that the induced moduli map $\varphi_U:V\to P_h$ 
	is quasi-finite. Let $Y$ be an arbitrary  projective compactification  of $V$. Then any holomorphic map $\gamma:\Delta^*\to V$ from the punctured unit disk $\Delta^*$ to $V$ extends to  a holomorphic map from the unit disk $\Delta$ to $Y$.
\end{thmx}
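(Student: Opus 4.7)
The plan is to deduce Theorem C from Theorems A and B via the Viehweg--Zuo Higgs bundle machinery, with an induction on $\dim V$ to treat the locus where the Finsler metric from Theorem B degenerates.

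\textbf{Setup and generic case.} First I would replace $Y$ by a log smooth modification $X$ of $V$ via Hironaka, with $D := X \setminus V$ a simple normal crossings divisor; it then suffices to extend $\gamma: \Delta^* \to V$ to a holomorphic map into $X$. Since $\varphi_U$ is quasi-finite, the family has maximal variation, so by Viehweg--Zuo \cite{VZ02, VZ03} in the canonically polarized case together with the semi-ample extensions of Popa--Schnell and Popa--Taji--Wu \cite{PS17, PTW18}, there is a Viehweg--Zuo Higgs bundle on $(X, D)$. Theorem B then produces a Zariski open $V^\circ \subset V$ and a Finsler metric $h$ on $T_X(-\log D)$, positively definite on $V^\circ$, satisfying $\sqrt{-1}\partial\bar\partial \log |\gamma'|_h^2 \geq \delta \gamma^* \omega$ for any holomorphic map from an open $C \subset \bC$ to $V$ whose image meets $V^\circ$. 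In the generic case $\gamma(\Delta^*) \cap V^\circ \neq \varnothing$, the inequality holds on $\Delta^*$ and $|\gamma'|_h^2 \not\equiv 0$, so Theorem A yields the extension directly.

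\textbf{Induction on dimension.} The remaining case is $\gamma(\Delta^*) \subset V \setminus V^\circ$, which I would handle by induction on $\dim V$. Let $Z$ be the irreducible component of the Zariski closure of $\gamma(\Delta^*)$ in $V$ that contains its image; then $\dim Z < \dim V$. Pick a resolution of singularities $\pi: \tilde Z \to Z$; since $\pi$ is proper birational and $\Delta^*$ is smooth, the valuative criterion of properness produces a unique lift $\tilde\gamma: \Delta^* \to \tilde Z$ of $\gamma$. Pulling back the polarized family along $\pi$ yields $(\pi^* f_U, \pi^* \sL) \in \mathscr{P}_h(\tilde Z)$, a smooth polarized family with semi-ample canonical sheaf whose moduli map $\tilde Z \to P_h$, being the composition of the birational $\pi$ with the quasi-finite $\varphi_U|_Z$, is generically quasi-finite. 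After performing a Stein factorization to contract exceptional divisors collapsed by $\varphi_U|_Z \circ \pi$, one obtains a smooth quasi-projective variety $\tilde Z'$ of strictly smaller dimension, carrying a smooth polarized family with quasi-finite moduli map, to which the inductive hypothesis applies. Extending $\tilde\gamma$ across $0$ into a projective compactification of $\tilde Z'$ and composing with $\tilde Z' \to Z \hookrightarrow Y$ gives the extension of $\gamma$. The base case $\dim V = 1$ reduces to the generic case, since $V \setminus V^\circ$ is then finite.

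\textbf{Main obstacle.} The principal technical point is the Stein factorization step: one must verify that, on a suitable smooth birational model of $Z$, the moduli map becomes quasi-finite and the pulled-back family remains in $\mathscr{P}_h$, and that the lift $\tilde\gamma$ descends to this model so that its extension entails the extension of $\gamma$ into $Y$. A cleaner alternative would be to strengthen the induction hypothesis so as to allow families whose moduli map is only generically quasi-finite, which would avoid the Stein factorization at the cost of slightly broadening Theorem B's hypothesis; in either case, the combinatorics of exceptional loci under repeated desingularization are the delicate part, while the analytic input from Theorems A and B is essentially complete.
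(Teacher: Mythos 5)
Your proposal takes a genuinely different route from the paper's, and the step you flag as the ``main obstacle'' is a real gap, not just a technical inconvenience. The paper avoids the induction entirely. It passes at once to $Z := \overline{\gamma(\Delta^*)}^{\mathrm{Zar}} \subset Y$, takes an embedded desingularization $\mu:\tilde Y \to Y$ so that the strict transform $\tilde Z$ is a smooth projective variety, and pulls back the family to $\tilde Z^\circ := \tilde Z\cap\mu^{-1}(V)$. The induced moduli map $\varphi_{X^\circ} = \varphi_U\circ\iota : \tilde Z^\circ \to P_h$ is only \emph{generically} finite --- but this is exactly what the existence theorems of \cite{VZ02,PTW18} require (maximal variation), so after a further birational modification $\nu:W\to\tilde Z$ there is a Viehweg--Zuo Higgs bundle on $W^\circ := \nu^{-1}(\tilde Z^\circ)$. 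Theorem B (\cref{thm:uniform}) then gives a Finsler metric positive definite on a dense Zariski open $W'\subset W^\circ$, and, crucially, the lift $\tilde\gamma$ of $\gamma$ to $W$ is \emph{automatically} Zariski dense in $W$ --- by construction $\gamma$ was Zariski dense in $Z$ --- so $\tilde\gamma(\Delta^*)\cap W'\neq\varnothing$. Theorem A (\cref{thm:Big Picard}) extends $\tilde\gamma$, and composing with $\mu\circ\nu$ extends $\gamma$ into $Y$. Both the ``generic'' and ``degenerate'' cases of your case split are handled uniformly by this single pass.

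The gap in your induction: after pulling the family back to a resolution $\tilde Z \to Z$, the moduli map $\tilde Z \to P_h$ is only generically quasi-finite --- fibres over exceptional divisors get contracted --- so the inductive hypothesis, which demands a quasi-finite moduli map, does not literally apply. Your proposed remedy, a Stein factorization contracting those exceptional divisors, does not obviously land you back in the setup: the intermediate variety need not be smooth, the pulled-back family need not descend to it (so it need not lie in $\mathscr P_h$ of the new base), and re-resolving reintroduces exceptional divisors, so there is no clear termination of the process. You already name the correct fix --- relax ``quasi-finite'' to ``generically quasi-finite'' --- but once you do so, observe that \cite{VZ02,PTW18} directly apply to the pulled-back family over (a modification of) $\tilde Z$, and the Zariski density of $\gamma$ in $Z$ guarantees it meets the resulting positivity locus; at that point the induction is superfluous and the argument closes without it. One minor further point: the lift $\tilde\gamma:\Delta^*\to\tilde Z$ should be obtained by noting that $\gamma^{-1}$ of the discriminant locus of $\tilde Z\to Z$ is a discrete subset of $\Delta^*$ (by Zariski density of $\gamma$) and then invoking removable singularities for maps to a projective variety, rather than the valuative criterion of properness, which is an algebro-geometric statement and does not apply as stated to holomorphic maps from $\Delta^*$.
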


Under the same assumption as \cref{main2}, we have already in \cite{Den18b}   proved the Brody hyperbolicity of $V$: there exists no entire curves $\gamma:\bC\to V$. Based on the  infinitesimal Torelli-type theorem proven in \cite[Theorem C]{Den18b} (see  \cref{Deng} below), more recently,  Lu-Sun-Zuo \cite{LSZ19} proved the Borel hyperbolicity of $V$: any holomorphic map from a quasi-projective variety to $V$ is algebraic. The use of Nevannlina theory in this article is inspired by their work, although our methods are different from theirs (see \cref{rem:LSZ}).

Finally, let us mention that in \cite{Den19}, we construct Viehweg-Zuo Higgs bundles over bases of maximally varying, log smooth families of Calabi-Yau families (see \cref{def:log smooth} for the definition of log smooth family). Applying \cref{main1,main2} to this result, we also obtain  big Picard theorem  for these base manifolds.
\begin{thmx}[=\cref{thm:CY}] \label{main:CY}
	Let $f^\circ:(X^\circ,D^\circ)\to Y^\circ$ be a log smooth family  over a quasi-projective manifold $Y^\circ$. Assume that  each fiber $(X_y,D_y):=(f^\circ)^{-1}(y)$ of $f^\circ$ is a klt pair, and  $K_{X_y}+D_y\equiv_{\mathbb{Q}}0$. Assume that the logarithmic Kodaira-Spencer map 
	$$
	T_{Y^\circ,y}\rightarrow  H^1 \big(X_y,T_{X_y}(-\log D_y)\big)
	$$
	is injective for any $y\in Y^\circ$. Then for any projective compactification $Y$ of the base $Y^\circ$,  any holomorphic map $\gamma:\Delta^*\to Y^\circ$ extends to  a holomorphic map from  $\Delta$ to $Y$. 
\end{thmx}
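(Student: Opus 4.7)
The plan is to reduce \cref{main:CY} to \cref{main1} and \cref{main2} via the Viehweg--Zuo Higgs bundle construction of \cite{Den19}, and then to treat the algebraic degeneracy locus by induction on $d:=\dim Y^\circ$. First I would observe that the extension conclusion is independent of the choice of projective compactification: given two compactifications $Y_1,Y_2$ of $Y^\circ$, the closure $\Gamma\subset Y_1\times Y_2$ of the graph of the birational identity $Y_1\dashrightarrow Y_2$ is projective with both projections isomorphisms over $Y^\circ$, and properness transports an extension from one compactification to the other. Using Hironaka's theorem I therefore fix $Y$ smooth projective with $D:=Y\setminus Y^\circ$ simple normal crossing. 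The construction in \cite{Den19} then produces a Viehweg--Zuo Higgs bundle on $(Y,D)$ from the hypothesis that the logarithmic Kodaira--Spencer map is pointwise injective, and \cref{main2} yields a K\"ahler metric $\omega$ on $Y$, a constant $\delta>0$, and a Finsler metric $h$ on $T_Y(-\log D)$, positively definite on a Zariski dense open subset $V^\circ\subset Y^\circ$, satisfying $\hess\log|\mu'|_h^2\geq\delta\,\mu^*\omega$ for every holomorphic $\mu$ from an open subset of $\mathbb{C}$ into $Y^\circ$ meeting $V^\circ$.

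Given $\gamma:\Delta^*\to Y^\circ$, I would then argue by induction on $d$. The base case $d=1$ is immediate: $Y^\circ\setminus V^\circ$ is finite, so either $\gamma$ is constant (and extends trivially), or its image meets $V^\circ$ and \cref{main1} applies. For the induction step, if $\gamma(\Delta^*)\cap V^\circ\neq\varnothing$, the curvature estimate of the previous paragraph combined with \cref{main1} extends $\gamma$ directly. Otherwise $\gamma(\Delta^*)\subset Y^\circ\setminus V^\circ$, and I let $Z\subset Y$ denote the irreducible Zariski closure of $\gamma(\Delta^*)$, of dimension strictly less than $d$. Choosing a log resolution $\mu:(\tilde Z,D_{\tilde Z})\to(Z,Z\cap D)$ with $\tilde Z$ smooth projective and $\tilde Z^\circ:=\tilde Z\setminus D_{\tilde Z}$, I lift $\gamma$ through $\mu$ to a unique $\tilde\gamma:\Delta^*\to\tilde Z^\circ$ using smoothness of $\Delta^*$ together with properness of $\mu$. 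The pullback of the original log Calabi--Yau family along $\tilde Z^\circ\to Y^\circ$ remains log smooth with klt fibers satisfying $K_{X_{\tilde z}}+D_{\tilde z}\equiv_{\mathbb{Q}}0$, and its logarithmic Kodaira--Spencer map factors as the composition of $d\mu$ with the restriction of the original Kodaira--Spencer to $T_{Z,\mu(\tilde z)}\hookrightarrow T_{Y^\circ,\mu(\tilde z)}$, hence is injective on the Zariski dense open subset $U\subset\tilde Z^\circ$ where $\mu$ is étale and $Z$ is smooth. Applying the inductive hypothesis on a smooth projective compactification of $U$ extends $\tilde\gamma$ across $0\in\Delta$, and composing with $\mu$ extends $\gamma$.

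The hard part will be the bookkeeping in the inductive step, since the everywhere-injectivity hypothesis of \cref{main:CY} may fail for the pulled-back family above the exceptional locus of $\mu$. I would handle this either by adopting a slightly more flexible formulation of \cref{main:CY} that only requires injectivity on a Zariski dense open subset---which is all that is actually used in \cite{Den19} and \cref{main2}, because the resulting Finsler metric is in any case only positively definite on a dense open subset---or by restricting to the open $U$ above and verifying that the non-constant map $\tilde\gamma$, whose image is Zariski dense in $\tilde Z$ by definition of $Z$, must meet $U$, so that Case A applies after one further reduction step. Either route closes the induction and yields the desired extension.
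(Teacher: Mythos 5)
Your plan follows essentially the same route as the paper: restrict attention to the Zariski closure $Z$ of $\gamma(\Delta^*)$, desingularize, pull back the log Calabi--Yau family, invoke \cite{Den19} (via Theorem~\ref{thm:CYmain}) to get a Viehweg--Zuo Higgs bundle on a birational model $W$ of $\tilde Z$, apply Theorem~\ref{thm:uniform} to get a Finsler metric positively definite on a dense Zariski open $W'\subset W$, and conclude from Theorem~\ref{thm:Big Picard}. Your preliminary observation that the conclusion is independent of the chosen compactification is correct and implicitly used.

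Where your write-up diverges is in framing the argument as an induction on $\dim Y^\circ$ with a dichotomy (Case~A: $\gamma$ meets $V^\circ$; Case~B: restrict to $Z$). The induction is unnecessary, and in fact it never actually recurses: once you pass to $Z$ and its resolution, the lifted map $\tilde\gamma:\Delta^*\to \tilde Z$ has \emph{Zariski dense} image by the definition of $Z$, so it automatically meets the dense open set $W'$ where the Finsler metric of the $\tilde Z$-family is positive, and Theorem~\ref{thm:Big Picard} applies directly. This is exactly your second workaround, which is the paper's (one-step, non-inductive) argument; notice you already observe that ``Case~A applies after one further reduction step.'' The paper also makes no Case~A/Case~B distinction at all, since the same restriction-to-$Z$ step covers $Z=Y$ as a special case.

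A genuine issue is your first workaround: replacing the everywhere-injectivity hypothesis of Theorem~\ref{main:CY} with generic injectivity, as a statement to be proved by the same induction, does \emph{not} close the induction. If the logarithmic Kodaira--Spencer map of the original family is only generically injective, then a subvariety $Z$ contained in its degeneracy locus gives a restricted family whose Kodaira--Spencer map is degenerate on a dense open set, so Theorem~\ref{thm:CYmain} is not available for it and the inductive hypothesis cannot be applied. The role of the pointwise hypothesis in Theorem~\ref{main:CY} (mirroring the quasi-finiteness of $\varphi_U$ in Theorem~\ref{main}) is precisely to guarantee that restriction to \emph{every} subvariety produces a family with generically injective Kodaira--Spencer map, hence a Viehweg--Zuo Higgs bundle after modification. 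So you should retain the pointwise hypothesis and use only the second route, at which point your argument coincides with the paper's.
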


\subsection{Acknowledgments.}  I would like to thank     Professors Jean-Pierre Demailly and Emmanuel Ullmo for their encouragements and supports.  This work is supported by Institut des Hautes \'Etudes Scientifiques.

\section{A differential geometric criteria for big Picard  theorem}
In the similar vein as the fundamental vanishing theorem  for jet differentials vanishing on some ample divisor by Siu-Yeung \cite{S-Y97} and Demailly \cite{Dem97b}, in this section we will establish a differential geometric criteria for big Picard type theorem via the logarithmic derivative lemma by Noguchi \cite{Nog81}. 
\subsection{Preliminary in Nevannlina theory}
Let $\mathbb{D}^*:=\{t\in \bC\mid |t|>1 \}$, and $\mathbb{D}:=\mathbb{D}^*\cup \infty$. Then via the map $z\mapsto \frac{1}{z}$, $\mathbb{D}^*$ is isomorphic to the punctured unit disk $\Delta^*$ and $\mathbb{D}$ is isomorphic to the unit disk $\Delta$. Therefore, for any holomorphic map $f$ from the punctured disk $\Delta^*$ into a projective variety $Y$, $f$ extends to the origin if and only if $f(\frac{1}{z}):\mathbb{D}^*\to Y$ extends to the infinity.

Let $(X,\omega)$ be a compact K\"ahler manifold, and $\gamma:\mathbb{D}^*\to X$ be a holomorphic map. Fix any $r_0>1$. Write $\mathbb{D}_r:=\{z\in \bC\mid r_0<|z|<r \}$.   The \emph{order function} is defined  by
$$
T_{\gamma,\omega}(r):= \int_{r_0}^{r}\frac{d\tau}{\tau}\int_{\bD_\tau}\gamma^*\omega.
$$ 
As is well-known, the asymptotic behavior of $T_{\gamma,\omega}(r)$ as $r\to \infty$ characterizes whether $\gamma$ can be extended over the $\infty$  (see \emph{e.g.} \cite[2.11. Cas   \guillemotleft local \guillemotright]{Dem97b} or \cite[Remark 4.7.4.(\lowerromannumeral{2})]{NW14}).  
\begin{lem}\label{lem:criteria}
$T_{\gamma,\omega}(r)=O(\log r)$ if and only if $\gamma$ is extended holomorphically over  $\infty$. \qed
\end{lem}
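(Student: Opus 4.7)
The plan is to prove the two directions separately: the reverse implication is a direct area estimate, while the main difficulty will lie in the forward direction, which I would reduce to a classical removable singularity theorem.

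For the reverse direction, suppose $\gamma$ extends to $\bar\gamma\colon\mathbb{D}\to X$. Passing to the coordinate $w=1/z$ identifies $\mathbb{D}$ with the ordinary unit disk, on which $\bar\gamma^{*}\omega$ is a smooth $(1,1)$-form of finite total mass, say $M$. Under this change of variable the annulus $\mathbb{D}_\tau$ corresponds to an annulus inside that disk, so $A(\tau):=\int_{\mathbb{D}_\tau}\gamma^{*}\omega\leq M$ uniformly in $\tau$. Substituting this bound into the definition of $T_{\gamma,\omega}(r)$ immediately gives $T_{\gamma,\omega}(r)\leq M\log(r/r_0)$.

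For the forward direction, I would first upgrade the hypothesis $T_{\gamma,\omega}(r)=O(\log r)$ to a finite total area statement. Since $A$ is non-decreasing, truncating the defining integral at $\sqrt{r}$ yields
\begin{equation*}
T_{\gamma,\omega}(r)\ \geq\ A(\sqrt{r})\int_{\sqrt{r}}^{r}\frac{d\tau}{\tau}\ =\ \tfrac{1}{2}A(\sqrt{r})\log r
\end{equation*}
for $r\geq r_0^{2}$, which combined with $T_{\gamma,\omega}(r)\leq C\log r+C'$ forces $\sup_{\tau}A(\tau)<\infty$. Equivalently, the graph $\Gamma_\gamma\subset\mathbb{D}^{*}\times X$ has finite two-dimensional volume with respect to the product Kähler metric on $\mathbb{D}\times X$.

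The main obstacle is then to promote this finite-area property to an actual holomorphic extension of $\gamma$ across $\infty$. For this I would invoke a Bishop-type removable singularity theorem: the closure of $\Gamma_\gamma$ inside $\mathbb{D}\times X$ is an analytic curve, either by Bishop's extension theorem for analytic sets of locally finite volume or via the Skoda--El Mir extension theorem for closed positive currents of finite mass. Since this closure is already a graph over the Zariski-open set $\mathbb{D}^{*}$, it must globally be the graph of a holomorphic extension $\bar\gamma\colon\mathbb{D}\to X$. This is precisely the analytic content recorded in the cited references \cite[\S 2.11]{Dem97b} and \cite[Remark 4.7.4]{NW14}, so no new work beyond invoking them is required once finite area has been established.
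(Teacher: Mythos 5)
The paper gives no proof of this lemma: it is stated with a \texttt{\textbackslash qed} and deferred to \cite[2.11. Cas local]{Dem97b} and \cite[Rem.~4.7.4]{NW14}, so there is no in-paper argument to compare against. Your proof is correct and is one of the two standard routes. Both directions check out: the easy direction is exactly the area bound you give ($A(\tau)\leq M$ implies $T\leq M\log(r/r_0)$); and for the hard direction your truncation $T_{\gamma,\omega}(r)\geq A(\sqrt r)\cdot\tfrac12\log r$ correctly turns the $O(\log r)$ hypothesis into the finiteness of $\sup_\tau A(\tau)$, hence finite area of the graph (via Wirtinger, area$(\Gamma_\gamma)=\int\omega_{\mathbb D}+\int\gamma^*\omega$).

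Two small points worth flagging. First, the version of Bishop's theorem you need is the one for removing an \emph{analytic} exceptional set $E=\{\infty\}\times X$, not the version requiring $\mathcal H^{2p}(E)=0$ (which fails here, as $\{\infty\}\times X$ has real dimension $2\dim_{\mathbb C}X\geq 2$); you do name ``Bishop's extension theorem for analytic sets of locally finite volume,'' which is the right statement, but it deserves to be said explicitly that its applicability hinges on $E$ being analytic. If you go via Skoda--El Mir instead, you get that $[\Gamma_\gamma]$ extends as a closed positive current, but you still need a support/structure theorem to conclude that $\overline{\Gamma_\gamma}$ is an analytic curve, so Bishop is the more direct tool. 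Second, the final step (``it must globally be the graph'') is fine but uses that $\pi\colon\overline{\Gamma_\gamma}\to\mathbb D$ is proper with finite fibers (as $X$ is compact), has degree one over $\mathbb D^*$, hence degree one everywhere, and a finite degree-one map onto the smooth disk is an isomorphism. For completeness, the other classical route to this lemma, more in the spirit of \cref{lem:decrease}, is to embed $X\hookrightarrow\mathbb P^N$, project to coordinate ratios so that each $u\circ\gamma\colon\mathbb D^*\to\mathbb P^1$ has $T_{u\circ\gamma,\omega_{FS}}(r)=O(\log r)$, and invoke the one-variable fact that a meromorphic function on $\mathbb D^*$ with logarithmically bounded characteristic extends meromorphically; this avoids Bishop entirely.
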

The following lemma is well-known to experts (see \emph{e.g.} \cite[Lemme 1.6]{Dem97b}).
\begin{lem}\label{lem:decrease}
	Let $ X$ be a projective manifold equipped with a hermitian metric $\omega$ and let $u:X\to \mathbb{P}^1$ be a rational function. Then for any holomorphic map $\gamma:\mathbb{D}^*\to X$, one has
	$$
	T_{u\circ\gamma,\omega_{FS}}(r)\leq CT_{\gamma,\omega}(r)+O(1)
	$$
	where $\omega_{FS}$ is the Fubini-Study metric for $\mathbb{P}^1$. \qed
\end{lem}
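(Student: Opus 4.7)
The plan is to realize $u^*\omega_{FS}$ as a bounded perturbation of the Chern curvature of an auxiliary hermitian line bundle on $X$, and then reduce the claim to a pointwise cohomological comparison with $\omega$; this is standard Nevanlinna theory for meromorphic maps.

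First, I would write $u=[s_0:s_1]$, where $s_0,s_1\in H^0(X,L)$ are sections of the line bundle $L:=u^*\sO_{\mathbb{P}^1}(1)$ on $X$ (well-defined on a resolution of indeterminacy, or as the saturation of the subsheaf generated by $s_0,s_1$). Since $\omega$ is K\"ahler and $X$ is projective, for $C$ sufficiently large the class $C[\omega]-c_1(L)$ is ample, so one may choose a smooth hermitian metric $h$ on $L$ whose Chern curvature form $c_1(L,h)$ satisfies the pointwise inequality $c_1(L,h)\leq C\omega$ on $X$.

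Second, away from the common zero locus $\{s_0=s_1=0\}$, which coincides with the indeterminacy locus of $u$, a direct computation (by comparing $h$ to the Fubini--Study pull-back metric $u^*h_{FS}$, for which $|s_0|^2_{u^*h_{FS}}+|s_1|^2_{u^*h_{FS}}\equiv 1$) gives the identity
\[
u^*\omega_{FS}=c_1(L,h)+\frac{\sqrt{-1}}{2\pi}\partial\bar\partial\log\bigl(|s_0|_h^2+|s_1|_h^2\bigr).
\]
Writing $\psi:=\log(|s_0|_h^2+|s_1|_h^2)$, a function bounded above on $X$ by compactness, composing with $\gamma$, integrating over $\mathbb{D}_\tau$, and then applying $\int_{r_0}^{r}\tfrac{d\tau}{\tau}$, we obtain
\[
T_{u\circ\gamma,\omega_{FS}}(r)=\int_{r_0}^{r}\frac{d\tau}{\tau}\int_{\mathbb{D}_\tau}\gamma^*c_1(L,h)\,+\,\int_{r_0}^{r}\frac{d\tau}{\tau}\int_{\mathbb{D}_\tau}\tfrac{\sqrt{-1}}{2\pi}\partial\bar\partial(\psi\circ\gamma).
\]
The first summand on the right is at most $C\,T_{\gamma,\omega}(r)$ by the pointwise curvature bound. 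The second is a Jensen-type term in a function bounded above: by Green's formula it evaluates to $\mu(r)-\mu(r_0)-c\log(r/r_0)$ where $\mu(\tau):=\tfrac{1}{2\pi}\int_0^{2\pi}(\psi\circ\gamma)(\tau e^{i\theta})\,d\theta$ and $c$ is a fixed constant determined by $\psi\circ\gamma$ on $|z|=r_0$. Because $\mu$ is bounded above, this contribution is $O(\log r)$ and is absorbed into $C\,T_{\gamma,\omega}(r)+O(1)$ (using \cref{lem:criteria} to handle the boundary case in which $\gamma$ extends, so that $T_{\gamma,\omega}(r)=O(\log r)$ as well).

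The main technical subtlety will be justifying the integration by parts at isolated points where $\gamma$ meets the base locus $\{s_0=s_1=0\}$, where $\psi\circ\gamma=-\infty$. A clean remedy is to first resolve the indeterminacy $\pi:\tilde X\to X$ and lift $\gamma$ to $\tilde\gamma:\mathbb{D}^*\to\tilde X$, which is possible since the source is one-dimensional; the analysis is then carried out on $\tilde X$, where $\tilde u=u\circ\pi$ is an honest morphism and the potential $\psi\circ\tilde\gamma$ is smooth away from finitely many points whose Poincar\'e--Lelong contributions are non-negative, and hence compatible with the direction of the desired inequality. Alternatively, one may simply interpret the identity above in the sense of currents on $X$ and observe that the Dirac components along the base locus do not affect the eventual upper bound.
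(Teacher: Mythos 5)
Your proof is correct, and it is essentially the standard Nevanlinna argument that the paper itself defers to by citing \cite[Lemme 1.6]{Dem97b} without reproducing a proof: write $u^*\omega_{FS}$ as a Chern form of an auxiliary metrized line bundle plus the $\d\bar\d$ of a globally bounded-above potential, then apply the first-main-theorem/Jensen mechanism. Two small remarks. First, the detour through ampleness of $C[\omega]-c_1(L)$ is unnecessary and slightly too restrictive (the lemma only assumes $\omega$ hermitian, so $[\omega]$ is not a cohomology class in general, and even for K\"ahler $\omega$ ampleness only produces a positive representative cohomologous to $C\omega-c_1(L,h)$, which then needs the $\d\bar\d$-lemma to be converted into a curvature form); the clean route is simply that for \emph{any} fixed smooth $h$ on $L$, compactness of $X$ and positivity of $\omega$ give the pointwise bound $c_1(L,h)\leq C\omega$. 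Second, as you correctly note, Green--Jensen on the annulus $\bD_\tau$ carries the extra $c\log(r/r_0)$ term, so your argument literally yields an $O(\log r)$, not $O(1)$, error; the cleanest way to absorb it is to observe that a non-constant $\gamma$ automatically satisfies $T_{\gamma,\omega}(r)\geq \varepsilon\log r+O(1)$ for some $\varepsilon>0$ (because $\int_{\bD_\tau}\gamma^*\omega$ is non-decreasing and eventually bounded below by a positive constant), so $O(\log r)$ is genuinely dominated by $C\,T_{\gamma,\omega}(r)$, while the constant case is trivial. Your handling of the base locus, with the non-negative Poincar\'e--Lelong masses pointing in the favorable direction for the desired inequality, is exactly right.
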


The following logarithmic derivative lemma by Noguchi is crucial in the proof of \cref{main1}.
\begin{lem}[\protecting{\cite[Lemma2.12]{Nog81},\cite[3.4. Cas local]{Dem97b}}]\label{lem:Noguchi}
	Let $u:\mathbb{D}^*\to \mathbb{P}^1$ be any meromorphic function. Then we have
	$$
	\frac{1}{2\pi}\int_{0}^{2\pi}\log ^+|(\log u)'(re^{i\theta})|d\theta \leq C(\log^+ T_{u,\omega_{FS}}(r)+\log r)+O(1)   \quad \lVert,
	$$
	for some constant $C>0$ which does not depend on $r$. 
	Here   the symbol \(\lVert\) means that	the  inequality holds outside a Borel subset of \((r_0,+\infty )\) of finite Lebesgue measure.  \qed
\end{lem}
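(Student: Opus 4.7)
My plan is to adapt Nevanlinna's classical proof of the lemma on logarithmic derivatives from the setting of a disk to that of the annular domain $\mathbb{D}^{*}=\{|z|>1\}$, treating the possibly essential singularity of $u$ at $\infty$ through the geometric characteristic $T_{u,\omega_{FS}}(r)$. The Ahlfors--Shimizu comparison
$$
\tfrac{1}{2\pi}\int_{0}^{2\pi}\log^{+}|u(re^{i\theta})|\,d\theta\;\leq\;T_{u,\omega_{FS}}(r)+O(1),
$$
which follows from a Jensen-type calculation against the Fubini--Study form, will let me pass between the geometric and logarithmic characteristics in the intermediate estimates.

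First I would establish a Poisson--Jensen formula on the annulus $A(r_{0},R)=\{r_{0}<|z|<R\}$ for $R$ slightly larger than $r$: this expresses $\log|u(z)|$ as a harmonic-measure integral of $\log|u|$ over the two boundary circles $|z|=r_{0}$ and $|z|=R$, minus logarithmic contributions from the zeros $\{a_{j}\}$ and poles $\{b_{j}\}$ of $u$ in $A(r_{0},R)$. Taking the harmonic conjugate and differentiating in $z$ yields an explicit formula of the shape
$$
\frac{u'(z)}{u(z)}=\int_{\partial A(r_{0},R)}K(z,w)\,\log|u(w)|\,|dw|\;+\;\sum_{j}\frac{1}{z-a_{j}}\;-\;\sum_{j}\frac{1}{z-b_{j}},
$$
where $K(z,w)$ is the $z$-derivative of the annular Poisson kernel.

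Next I would apply $\log^{+}|\cdot|$ to this identity, integrate $\tfrac{1}{2\pi}\int_{0}^{2\pi}d\theta$ on $|z|=r$, and use the subadditivity $\log^{+}(x+y)\leq\log^{+}x+\log^{+}y+\log 2$ together with Jensen's concavity inequality to move $\log^{+}$ past each boundary integral. The boundary piece is then bounded via the Ahlfors--Shimizu comparison by $C\log^{+}T_{u,\omega_{FS}}(R)$ plus a piece from the fixed circle $|z|=r_{0}$; the zero/pole sums are handled by the classical proximity identity $\tfrac{1}{2\pi}\int_{0}^{2\pi}\log^{+}|re^{i\theta}-a|^{-1}\,d\theta=O(1)$ per term, combined with an integration-by-parts comparison of the counting functions $n(R,0;u)$, $n(R,\infty;u)$ with $T_{u,\omega_{FS}}(R)$ supplied by the First Main Theorem. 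The outcome is an estimate of the form $C(\log^{+}T_{u,\omega_{FS}}(R)+\log R+\log\tfrac{R}{R-r})+O(1)$.

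Finally, I would choose $R=r+1/T_{u,\omega_{FS}}(r)$; Borel's growth lemma then gives $T_{u,\omega_{FS}}(R)\leq 2T_{u,\omega_{FS}}(r)$ outside a set of finite Lebesgue measure, which is precisely the exceptional set indicated by the symbol $\lVert$, and the remaining terms collapse into $C(\log^{+}T_{u,\omega_{FS}}(r)+\log r)+O(1)$. The main obstacle I expect is the treatment of the fixed inner boundary $|z|=r_{0}$: its kernel contribution is a priori independent of $r$, and one must verify that the $z$-derivatives of the annular Poisson kernel decay sufficiently on $|z|=r$ relative to $|w|=r_{0}$ to produce only an additive $O(1)$ error and not an $O(\log r)$ one. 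A secondary technical point is producing the explicit annular Poisson--Jensen formula with effective kernel bounds, for which one can either expand the Green function in theta series on a symmetric annulus after conformal rescaling, or pull back the disk formula through a branched double cover.
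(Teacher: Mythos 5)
The paper does not prove this lemma; it is quoted directly from Noguchi [Nog81, Lemma 2.12] and Demailly [Dem97b, \S 3.4] and the statement is simply closed without proof. Your outline — Ahlfors--Shimizu comparison, annular Poisson--Jensen, differentiation of the kernel, comparison of counting functions with the characteristic, Borel's calculus lemma and a judicious choice of $R$ — is the classical route and coincides with what the cited references do, so the strategy is the right one.

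However, the two issues you flag as ``expected obstacles'' are precisely the technical heart of the local case, and as written they are either left open or stated in a misleading way. On the zero/pole side, invoking ``the classical proximity identity $\frac{1}{2\pi}\int_0^{2\pi}\log^+|re^{i\theta}-a|^{-1}\,d\theta = O(1)$ per term'' is not the right reduction: summed over the $n(R,0;u)+n(R,\infty;u)$ zeros and poles this would give $n(R)\cdot O(1)$, which is much too large. What one actually does is keep $\log^+$ outside the entire sum, use $\log^+\bigl|\sum_j(z-a_j)^{-1}\bigr|\leq \log^+ n(R)+\max_j\log^+|z-a_j|^{-1}+O(1)$, bound $n(R)$ by $T_{u,\omega_{FS}}(R')/\log(R'/R)$ via the First Main Theorem and a counting-function integration by parts, and only then extract the $\log^+T$ contribution — this is where part of the right-hand side of the lemma comes from, so it cannot be hidden in an $O(1)$. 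On the inner boundary side, $u$ is meromorphic in a full neighborhood of $|z|=r_0$, so the boundary data $\log|u|$ on $|z|=r_0$ is a fixed $L^1$ function; but what enters the differentiated Poisson--Jensen identity at $|z|=r$ is the $z$-derivative of the annular Poisson kernel against $|w|=r_0$, and one must still check that this kernel stays bounded (in fact decays) as $R\to\infty$ with $r\sim R$, e.g.\ from the theta-series expansion of the annular Green function or from the branched double-cover trick you mention. Both steps are carried out in [Nog81] and in [NW14, \S 1.2, \S 4.7]; as written your proposal is a correct plan, not yet a complete proof.
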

We need the lemma by E. Borel.
\begin{lem}[\protecting{\cite[Lemma 1.2.1]{NW14}}]\label{lem:Borel}
	Let $\phi(r)\geq 0 (r\geq r_0\geq 0)$ be a monotone increasing
	function. For every $\delta>0$, 
	$$
	\frac{d}{d r}\phi(r)\leq \phi(r)^{1+\delta}\quad \lVert.
	$$  \qed
\end{lem}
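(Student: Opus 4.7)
The plan is a direct measure-theoretic estimate exploiting the fact that $\phi^{-\delta}$ acts as a convergent ``barrier'' when integrated against $d\phi$. Since $\phi$ is monotone increasing on $[r_0,\infty)$, Lebesgue's differentiation theorem guarantees that $\phi'$ exists and is measurable almost everywhere, so the exceptional set in question is well-defined as a Borel set.

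After possibly shifting $r_0$ to the infimum of $\{r:\phi(r)>0\}$ (this shift removes only an interval on which $\phi\equiv 0$, where the inequality $0\leq 0$ holds trivially), I will define
$$E_\delta := \bigl\{r\geq r_0 : \phi'(r) > \phi(r)^{1+\delta}\bigr\}$$
and aim to prove that $|E_\delta|<\infty$. The key observation is that on $E_\delta$ the integrand $\phi'(r)/\phi(r)^{1+\delta}$ exceeds $1$, whence
$$|E_\delta| \;\leq\; \int_{E_\delta}\frac{\phi'(r)}{\phi(r)^{1+\delta}}\,dr \;\leq\; \int_{r_0}^{\infty}\frac{\phi'(r)}{\phi(r)^{1+\delta}}\,dr.$$
The last integral is finite: applying the monotone-function inequality $\int_a^b F'(r)\,dr \leq F(b)-F(a)$ to the monotone increasing function $F(r) := -\phi(r)^{-\delta}/\delta$, whose a.e.\ derivative equals $\phi'(r)/\phi(r)^{1+\delta}$, gives
$$\int_{r_0}^{R}\frac{\phi'(r)}{\phi(r)^{1+\delta}}\,dr \;\leq\; \frac{1}{\delta\,\phi(r_0)^{\delta}} - \frac{1}{\delta\,\phi(R)^{\delta}} \;\leq\; \frac{1}{\delta\,\phi(r_0)^{\delta}},$$
and letting $R\to\infty$ yields $|E_\delta|\leq 1/(\delta\,\phi(r_0)^{\delta})<\infty$, as required.

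The only real-analytic subtlety will be the use of the monotone-function inequality $\int_a^b F'\,dr \leq F(b)-F(a)$ in place of the fundamental theorem of calculus: since a monotone $\phi$ may possess a singular component in its distributional derivative, equality need not hold. Fortunately the inequality goes in the useful direction for us, so no obstacle arises from it. Aside from this standard fact from real analysis, the argument is essentially a one-line integration, and I expect no further difficulties.
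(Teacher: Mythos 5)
The paper does not actually prove this lemma — it is stated with \qed and a citation to [NW14, Lemma 1.2.1], so the burden is deferred to that reference. Your proof is correct in substance and is, in fact, the standard proof of Borel's growth lemma one finds there: bound the measure of the exceptional set by $\int \phi'/\phi^{1+\delta}\,dr$ (the integrand exceeds $1$ on the exceptional set) and then observe this integral is finite because $\phi^{-\delta}$ is bounded. The monotone-function inequality $\int_a^b F'\,dr\le F(b)-F(a)$ you invoke (Lebesgue's theorem on a.e.\ differentiability of monotone functions, plus the corresponding one-sided fundamental theorem) is exactly the right tool, and you correctly note the inequality is in the favourable direction so any singular part of $\phi$ is harmless.

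One small loose end: after shifting $r_0$ to $r_1:=\inf\{r:\phi(r)>0\}$, you may still have $\phi(r_1)=0$ (e.g.\ when $\phi$ is continuous and vanishes up to $r_1$), in which case your final bound $1/(\delta\,\phi(r_0)^\delta)$ is $+\infty$ and says nothing. The fix is trivial — pick any $r_2>r_1$ so that $\phi(r_2)>0$, bound the piece of $E_\delta$ in $(r_0,r_2)$ crudely by $r_2-r_0<\infty$, and apply your estimate on $[r_2,\infty)$ to get $|E_\delta|\le (r_2-r_0)+1/(\delta\,\phi(r_2)^\delta)<\infty$. With that small patch the argument is complete.
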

We recall two useful formulas (the second one is the well-known Jensen formula).
\begin{lem}
	Write $\log^+x:={\rm max}(\log x ,0)$. 
	\begin{align}\label{eq:concave}
 \log^+(\sum_{i=1}^{N}x_i)\leq \sum_{i=1}^{N}\log^+x_i+ \log N\quad\mbox{ for }x_i\geq 0.\\\label{eq:Jensen}
 \frac{1}{\pi}\int_{r_0}^{r}\frac{d\tau}{\tau}\int_{\bD_\tau}\hess v=\frac{1}{2\pi}\int_{0}^{2\pi}v(re^{i\theta})d\theta-\frac{1}{2\pi}\int_{0}^{2\pi}v(r_0e^{i\theta})d\theta
	\end{align}
	for all functions $v$ so that $\hess v$ exists as measures (\emph{e.g.} $v$ is the difference of two subharmonic functions). \qed
\end{lem}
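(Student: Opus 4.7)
The statement collects two independent elementary inequalities; I would treat them separately.

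For \eqref{eq:concave}, the plan is to use $\sum_{i=1}^N x_i \le N\max_i x_i$ together with the monotonicity and multiplicative subadditivity of $\log^+$. Monotonicity immediately gives $\log^+\bigl(\sum x_i\bigr)\le \log^+(N\max_i x_i)$. A short case analysis shows $\log^+(ab)\le \log^+a+\log^+b$ for $a,b\ge 0$: if $ab\le 1$, the left side is $0$; if $ab>1$, then $\log(ab)=\log a+\log b\le\log^+a+\log^+b$. Setting $a=N$ and $b=\max_i x_i$, and using $\log^+(\max_i x_i)\le\sum_i\log^+x_i$ (valid since $\log^+\ge 0$), finishes the estimate.

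For \eqref{eq:Jensen}, I would start by using Fubini to rewrite the left-hand side as $\tfrac{1}{\pi}\int_{\bD_r}\log(r/|z|)\,\hess v$, and then apply Green's identity on the annulus $\bD_r=\{r_0<|z|<r\}$. In one complex dimension $\hess v=\tfrac12\Delta v\,dx\wedge dy$, and the test function $w(z):=\log(r/|z|)$ is harmonic on the annulus, so $\hess w=0$ there. Integration by parts therefore reduces the bulk integral to boundary contributions over the two circles $|z|=r$ and $|z|=r_0$. On the outer circle $w$ vanishes, so only the Poisson-type term survives and contributes $2\pi M(r)$, where $M(\rho):=\tfrac{1}{2\pi}\int_0^{2\pi}v(\rho e^{i\theta})\,d\theta$; on the inner circle $w$ is the constant $\log(r/r_0)$ and the analogous boundary terms produce $-2\pi M(r_0)$. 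Dividing by $\pi$ yields the claimed equality.

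The only technical subtlety is that $v$ is only assumed to be a difference of two subharmonic functions, so $\hess v$ exists a priori only as a signed Radon measure and the pointwise identity $\hess v=\tfrac12\Delta v\,dA$ must be interpreted distributionally. I would resolve this by convolving $v$ with a smooth compactly supported mollifier $\rho_\varepsilon$, applying the computation above to the smooth regularizations $v*\rho_\varepsilon$, and passing to the limit using $L^1_{\mathrm{loc}}$-convergence of $v*\rho_\varepsilon$ to $v$ together with the weak convergence of $\hess(v*\rho_\varepsilon)$ to $\hess v$ as currents. This regularization is standard, and I do not expect it to be the main obstacle; the essential content of the lemma is the Stokes/Green computation sketched above, and the main value of recording these identities here is to fix the circular-mean notation used in the Nevanlinna-theoretic arguments that follow.
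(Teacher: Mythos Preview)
The paper records this lemma without proof (it ends with \qed), so there is nothing to compare against; I will just assess your argument.

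Your proof of \eqref{eq:concave} is correct and standard. For \eqref{eq:Jensen}, your strategy (Fubini to produce the kernel $w(z)=\log(r/|z|)$, then Green's identity on the annulus $\bD_r$) is the right one, but your boundary bookkeeping on the inner circle is incomplete. You note that $w$ equals the nonzero constant $\log(r/r_0)$ there, yet you only record the contribution from $-v\,\partial_n w$; the companion term $w\,\partial_n v$ does \emph{not} vanish on $|z|=r_0$, and it contributes
\[
-\,r_0\,M'(r_0)\,\log\frac{r}{r_0},\qquad M(\rho):=\frac{1}{2\pi}\int_0^{2\pi}v(\rho e^{i\theta})\,d\theta.
\]
(Also, the overall constant in front of the Green integral is $\tfrac{1}{2\pi}$, not $\tfrac{1}{\pi}$, since $\hess v=\tfrac12\Delta v\,dA$.) In fact, if you carry your computation through carefully you will find that the identity \eqref{eq:Jensen} as printed is off by exactly this extra term: take $v(z)=\log|z|$, which is harmonic on the annulus, so the left side vanishes while the stated right side equals $\log(r/r_0)$.

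This is harmless for the paper. The missing term has the form $c\cdot\log r$ with $c$ a constant depending only on $v$ and $r_0$, so in the proof of \cref{thm:Big Picard} it is absorbed into the right-hand side of the final inequality $T_{f,\omega}(r)\le C\bigl(\log^+T_{f,\omega}(r)+\log r\bigr)+O(1)$, which is all that is needed to conclude $T_{f,\omega}(r)=O(\log r)$. So your approach is fine, but you should either correct the boundary analysis and carry the extra $\log r$ term through the application, or restate \eqref{eq:Jensen} with an additive $O(\log r)$ on the right.
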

\subsection{A criteria for big Picard   theorem} 
\begin{dfn}[Finsler metric]\label{def:Finsler}
	Let \(E\) be a  holomorphic vector bundle on a complex manifold $X$. A \emph{Finsler metric} on \(E\) is a real non-negative  \emph{continuous}  function \(h:E\to  \mathclose[ 0,+\infty\mathopen[ \) such
	that 
	\[h(av) = |a|h(v)\]
	for any \(a\in \bC \) and \(v\in  E\).  	The metric $h$ is \emph{degenerate} at a point \(x\in X \)   if \(h(v)=0\) for some
	nonzero \(v\in E_{x} \), and the set of such degenerate points  is denoted by  \(\Delta_{h} \).
\end{dfn}
We shall mention that our definition is a bit different from that in \cite[Chapter 2, \S 3]{Kob98}, which requires \emph{convexity}, and the Finsler metric therein    can be upper-semi continuous. 

Let us now state and prove the main result in this section.
\begin{thm}[Criteria for big Picard theorem]\label{thm:Big Picard}
	Let $X$ be a projective manifold   and let $D$ be a simple normal crossing divisor on $X$. Let $h$ be a (possibly degenerate) Finsler metric of $T_X(-\log D)$.  Assume that $f:\bD^*\to X\setminus D$  is a  holomorphic map such that $  |f'(t)|_h^2\not\equiv 0$, and 
\begin{align}\label{eq:condition}
	\frac{1}{\pi}\hess \log |f'(t)|_h^2\geq f^*\omega
\end{align}
for some smooth K\"ahler metric $\omega$ on $X$. Then $f$ extends to a holomorphic map $\bar{f}:\mathbb{D}\to X$.
\end{thm}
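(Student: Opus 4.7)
The plan is to convert the extension problem into a sublogarithmic growth bound for the Nevanlinna order function $T_{f,\omega}(r)$, and then to derive this bound by combining Jensen's formula with Noguchi's logarithmic derivative lemma. By \cref{lem:criteria}, it suffices to show $T_{f,\omega}(r)=O(\log r)$, and this is what I shall target.

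Applying the curvature hypothesis \eqref{eq:condition} and Jensen's formula \eqref{eq:Jensen} to $v:=\log|f'(t)|_h^2$ (which, modulo a smooth function, is plurisubharmonic by hypothesis), I obtain
$$T_{f,\omega}(r)\leq \frac{1}{\pi}\int_{r_0}^{r}\frac{d\tau}{\tau}\int_{\bD_\tau}\hess\log|f'(t)|_h^2 = \frac{1}{2\pi}\int_{0}^{2\pi}\log|f'(re^{i\theta})|_h^2\,d\theta + O(1).$$
It therefore remains to control the right-hand integral by $O(\log^+ T_{f,\omega}(r)+\log r)$ outside an exceptional set of finite Lebesgue measure.

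To achieve this, I will cover $X$ by finitely many Zariski open charts $U_\alpha$ equipped with local coordinates $z_1^{(\alpha)},\ldots,z_n^{(\alpha)}$ which are rational functions on $X$ and adapted to $D$, in the sense that $D\cap U_\alpha=\{z_1^{(\alpha)}\cdots z_{k_\alpha}^{(\alpha)}=0\}$. For the directions transverse to $D$ (indices $i>k_\alpha$) I replace $z_i^{(\alpha)}$ by a shifted rational function $u_{\alpha,i}:=z_i^{(\alpha)}-c_{\alpha,i}$ whose zero set avoids the subregion of $U_\alpha$ being used, refining the cover as needed. On each shrunken chart, $T_X(-\log D)$ then admits a local frame consisting of $z_i^{(\alpha)}\partial_i^{(\alpha)}$ for $i\leq k_\alpha$ and $u_{\alpha,i}\partial_i^{(\alpha)}$ for $i>k_\alpha$, and the coefficients of $f'(t)$ in this frame are all of the form $g_{\alpha,i}'(t)/g_{\alpha,i}(t)$, where $g_{\alpha,i}$ is the pullback by $f$ of the appropriate rational function (a meromorphic function on $\bD^*$). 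Exploiting the continuity of the Finsler metric $h$, the compactness of $X$, a partition of unity $\{\chi_\alpha\}$ subordinate to $\{U_\alpha\}$, and the concavity inequality \eqref{eq:concave}, I will deduce a global pointwise bound
$$\log|f'(t)|_h^2 \leq C + \sum_{\alpha,i}\log^+|g_{\alpha,i}'(t)/g_{\alpha,i}(t)|^2.$$

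Integrating this over $|t|=r$, invoking Noguchi's logarithmic derivative lemma \cref{lem:Noguchi} for each meromorphic $g_{\alpha,i}\colon\bD^*\to\bP^1$, and using \cref{lem:decrease} to estimate $T_{g_{\alpha,i},\omega_{FS}}(r)\leq CT_{f,\omega}(r)+O(1)$, I arrive at
$$T_{f,\omega}(r)\leq C\bigl(\log^+T_{f,\omega}(r)+\log r\bigr)+O(1)\quad\lVert.$$
An application of \cref{lem:Borel} to $\phi(r):=T_{f,\omega}(r)+1$ then forces $T_{f,\omega}(r)=O(\log r)$ outside an exceptional set of finite measure, and monotonicity of $T_{f,\omega}$ extends this to all large $r$. \cref{lem:criteria} then delivers the holomorphic extension $\bar{f}\colon\bD\to X$.

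The principal technical obstacle is the middle step: expressing $|f'|_h^2$ globally in terms of logarithmic derivatives of rational functions on $X$. Using the ordinary derivatives $(g_{\alpha,i})'$ of the transverse coordinates would only be controlled at the level of $T_{f,\omega}(r)$ via the first main theorem of Nevanlinna, which is insufficient; the remedy is the shift trick $z_i^{(\alpha)}\rightsquigarrow z_i^{(\alpha)}-c_{\alpha,i}$, turning each coefficient into a genuine logarithmic derivative to which \cref{lem:Noguchi} supplies the crucial $O(\log^+ T+\log r)$ estimate. Managing the interplay of this shift with the compactness of $X$ and the possible degeneracy of $h$ is the crux of the argument.
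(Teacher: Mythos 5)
Your proposal is correct and follows the same Nevanlinna-theoretic skeleton as the paper: reduce to $T_{f,\omega}(r)=O(\log r)$ via \cref{lem:criteria}, pass from the curvature hypothesis to an integral inequality via Jensen's formula \eqref{eq:Jensen}, localize with an affine covering and a partition of unity, apply concavity \eqref{eq:concave} and Noguchi's lemma \cref{lem:Noguchi} together with \cref{lem:decrease}, and finish with Borel's lemma \cref{lem:Borel}. The one genuine point of divergence is how the \emph{transverse} coordinate directions (those not tangent to $D$) are treated. The paper keeps the natural frame $\partial_{x_{\alpha j}}$ there, so the corresponding coefficients of $f'$ are ordinary derivatives $f_{\alpha j}'$, and the integrals $\frac{1}{2\pi}\int_0^{2\pi}\log^+|\rho_\alpha\circ f\cdot f_{\alpha j}'|\,d\theta$ are controlled by a separate computation (the paper's \cref{claim}) which goes straight through Borel's lemma using $\rho_\alpha\,\sn\, dx_{\alpha j}\wedge d\bar x_{\alpha j}\leq C\omega$. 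Your shift $z_i^{(\alpha)}\rightsquigarrow z_i^{(\alpha)}-c_{\alpha,i}$, with $c_{\alpha,i}$ chosen off the image of the (compact) support of $\chi_\alpha$ under $z_i^{(\alpha)}$, converts each such coefficient into $(\log(u_{\alpha,i}\circ f))'$ as well, so Noguchi's lemma alone handles \emph{all} coefficients and the extra claim becomes superfluous. This is a valid and somewhat more uniform arrangement of the same ingredients; what it buys is conceptual economy (one estimate instead of two), at the modest cost of having to verify that the refined covering, the choice of constants $c_{\alpha,i}$, and the local boundedness of $h$ relative to the shifted frame can all be arranged compatibly, which you correctly identify as the crux and which indeed works since each $\operatorname{supp}\chi_\alpha$ is compact in $U_\alpha$ and $h$ is continuous.
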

The proof is an application of logarithmic derivative lemma, which is inspired by \cite[\S 4]{Dem97b} and \cite[Lemma 4.7.1]{NW14}.
\begin{proof}[Proof of \cref{thm:Big Picard}]
	We take a finite affine covering $\{U_\alpha\}_{\alpha\in I}$ of $X$ and rational holomorphic functions
	$(x_{\alpha1},\ldots,x_{\alpha n})$ on $U_\alpha$ so that
	\begin{align*}
	dx_{\alpha1}\wedge\cdots\wedge dx_{\alpha n}\neq 0 \ \mbox{ on } U_\alpha\\
	D\cap U_\alpha=(x_{\alpha,s(\alpha)+1} \cdots  x_{\alpha n}=0)
	\end{align*}
	Hence 
	\begin{align}\label{eq:basis}
(e_{\alpha 1},\ldots,e_{\alpha n}):=	(\frac{\d }{\d x_{\alpha 1}},\ldots,\frac{\d }{\d x_{\alpha s(\alpha)}},x_{\alpha, s(\alpha)+1}\frac{\d }{\d x_{\alpha, s(\alpha)+1}},\ldots,x_{\alpha n}\frac{\d}{\d x_{\alpha n}})
	\end{align} 
	is a basis for $T_X(-\log D)|_{U_\alpha}$. Write $$ (f_{\alpha1}(t),\ldots,f_{\alpha n}(t)):=(x_{\alpha 1}\circ f,\ldots,x_{\alpha n}\circ f).$$   With respect to the trivialization of $T_X(-\log D)$ induced by the basis \eqref{eq:basis},   $f'(t)$ can be written as
	$$f'(t)= f_{\alpha1}'(t)e_{\alpha 1}+\cdots+f_{\alpha s(\alpha)}'(t)e_{\alpha s(\alpha)}+(\log f_{\alpha,s(\alpha)+1})'(t)e_{\alpha, s(\alpha)+1}+\cdots+(\log f_{\alpha  n})'(t)e_{\alpha n}.$$
	Let $\{\rho_\alpha \}_{\alpha\in I}$ be a partition of unity subordinated to $\{U_\alpha\}_{\alpha\in I}$.   Since $h$ is Finsler metric for $T_X(-\log D)$ which is continuous and locally bounded from above by \cref{def:Finsler}, and $ I$ is a finite set,    there is a constant $C>0$ so that
\begin{align}\label{eq:unity}
	\rho_\alpha\circ f\cdot|f'(t)|_h^2\leq C \Big( \sum_{j=1}^{s(\alpha)}\rho_\alpha\circ f\cdot|f_{\alpha j}'(t)|^2+\sum_{i=s(\alpha)+1}^{n}|(\log f_{\alpha i})'(t)|^2 \Big) \quad \forall t\in \mathbb{D}^*
	\end{align}
	for any $\alpha$. Hence
	\begin{align}\nonumber
	T_{f,\omega}(r)&:=\int_{r_0}^{r}\frac{d\tau}{\tau}\int_{\bD_\tau}f^*\omega\stackrel{\eqref{eq:condition}}{\leq} \int_{r_0}^{r}\frac{d\tau}{\tau}\int_{\bD_\tau}		\frac{1}{\pi}\hess \log |f'|_h^2\\\nonumber
	&\stackrel{\eqref{eq:Jensen}}{\leq}  \frac{1}{2\pi}\int_{0}^{2\pi} \log|f'(re^{i\theta})|_h d\theta-\frac{1}{2\pi}\int_{0}^{2\pi} \log|f'(r_0e^{i\theta})|_h d\theta\\\nonumber
	&\leq \frac{1}{2\pi}\int_{0}^{2\pi} \log^+|f'(re^{i\theta})|_h d\theta+O(1)= \frac{1}{2\pi}\int_{0}^{2\pi} \log^+\sum_{\alpha}|\rho_\alpha\circ f\cdot f'(re^{i\theta})|_h d\theta+O(1)\\\nonumber
	&\stackrel{\eqref{eq:concave}}{\leq} \sum_{\alpha}\frac{1}{2\pi}\int_{0}^{2\pi} \log^+|\rho_\alpha\circ f\cdot f'(re^{i\theta})|_h d\theta+O(1)\\\nonumber
	&\stackrel{\eqref{eq:unity}}{\leq} \sum_{\alpha}\sum_{i=s(\alpha)+1}^n \frac{1}{2\pi}\int_{0}^{2\pi} \log^+|(\log f_{\alpha i})'(re^{i\theta})| d\theta\\\nonumber
	&\quad +\sum_{\alpha}\sum_{j=1}^{s(\alpha)}\frac{1}{2\pi}\int_{0}^{2\pi}\log ^+|\rho_\alpha\circ f\cdot f_{\alpha j}'(re^{i \theta})|d\theta +O(1)\\\nonumber
	&\stackrel{\cref{lem:Noguchi}}{\leq} C_1\sum_{\alpha}\sum_{i=s(\alpha)+1}^{n}\big(\log^+  T_{f_{\alpha i},\omega_{FS}}(r) +\log r\big)\\\nonumber
	&\quad +\sum_{\alpha}\sum_{j=1}^{s(\alpha)}\frac{1}{2\pi}\int_{0}^{2\pi}\log ^+|\rho_\alpha\circ f\cdot f_{\alpha j}'(re^{i \theta})|d\theta+O(1)\quad \lVert \\\label{eq:final}
	&\stackrel{\cref{lem:decrease}}{\leq} C_2(\log^+T_{f,\omega}(r)+ \log r)+\sum_{\alpha}\sum_{j=1}^{s(\alpha)}\frac{1}{2\pi}\int_{0}^{2\pi}\log ^+|\rho_\alpha\circ f\cdot f_{\alpha j}'(re^{i \theta})|d\theta+O(1)\quad \lVert.
	\end{align}
	where $C_1$ and $C_2$ are two positive constants which do not depend on $r$.
	\begin{claim}\label{claim}
		For any $\alpha\in I$ and any $j\in \{1,\ldots,s(\alpha)\}$, one has
	\begin{align}\label{eq:new}
\frac{1}{2\pi}\int_{0}^{2\pi}\log ^+|\rho_\alpha\circ f\cdot f_{\alpha j}'(re^{i \theta})|d\theta\leq C_3(\log^+T_{f,\omega}(r)+ \log r)+O(1)\quad \lVert
	\end{align} 
	for	positive constant $C_3$ which does not depend on $r$.
	\end{claim}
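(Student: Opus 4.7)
The plan is to use the partition-of-unity cutoff $\rho_\alpha\circ f$ to replace the a~priori unbounded holomorphic derivative $f'_{\alpha j}$ by a logarithmic derivative, to which Noguchi's lemma applies. The key observation is that $\rho_\alpha$ has compact support in $U_\alpha$, and on this compact set the rational function $x_{\alpha j}$ is holomorphic, hence bounded. Therefore there is a constant $M>0$, independent of $t$, with
\[
|\rho_\alpha\circ f(t)\cdot f_{\alpha j}(t)|\leq M \quad \text{for all } t\in\mathbb{D}^*.
\]
This is where the cutoff is crucial: without it, $f_{\alpha j}$ would only be bounded by its proximity function, producing a linear term in $T_{f,\omega}$ rather than the logarithmic term required.

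Next, I would use the meromorphic identity $f'_{\alpha j}=f_{\alpha j}\cdot (\log f_{\alpha j})'$ on $\mathbb{D}^*$ (both sides holomorphic, since $f_{\alpha j}=x_{\alpha j}\circ f$ is a meromorphic function $\mathbb{D}^*\to\mathbb{P}^1$). Combined with the previous step, this gives
\[
|\rho_\alpha\circ f\cdot f'_{\alpha j}(t)|=|\rho_\alpha\circ f\cdot f_{\alpha j}(t)|\cdot|(\log f_{\alpha j})'(t)|\leq M\,|(\log f_{\alpha j})'(t)|,
\]
so that $\log^+|\rho_\alpha\circ f\cdot f'_{\alpha j}|\leq \log^+|(\log f_{\alpha j})'|+O(1)$.

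Now integrate over the circle of radius $r$ and apply Noguchi's logarithmic derivative lemma (\cref{lem:Noguchi}) to the meromorphic function $f_{\alpha j}:\mathbb{D}^*\to\mathbb{P}^1$:
\[
\frac{1}{2\pi}\int_0^{2\pi}\log^+|\rho_\alpha\circ f\cdot f'_{\alpha j}(re^{i\theta})|\,d\theta\leq C\bigl(\log^+T_{f_{\alpha j},\omega_{FS}}(r)+\log r\bigr)+O(1)\quad \lVert.
\]
Finally, applying \cref{lem:decrease} with the rational function $x_{\alpha j}:X\dashrightarrow\mathbb{P}^1$ yields $T_{f_{\alpha j},\omega_{FS}}(r)\leq C'\,T_{f,\omega}(r)+O(1)$, hence $\log^+T_{f_{\alpha j},\omega_{FS}}(r)\leq \log^+T_{f,\omega}(r)+O(1)$, which gives the claimed inequality with a suitable $C_3$. (The case $f_{\alpha j}\equiv 0$ is trivial.)

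The only subtle point, and the one I would pay most attention to, is the first step: the cutoff $\rho_\alpha\circ f$ must be absorbed into $f_{\alpha j}$ rather than into $(\log f_{\alpha j})'$, otherwise one only recovers the proximity bound $\frac{1}{2\pi}\int\log^+|f_{\alpha j}(re^{i\theta})|\,d\theta\lesssim T_{f_{\alpha j}}(r)$, which is linear (not logarithmic) in $T_{f,\omega}(r)$, and hence too weak to be re-absorbed into the left-hand side of the main chain of inequalities \eqref{eq:final} via Borel's lemma.
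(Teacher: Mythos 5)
Your proof is correct, but it takes a genuinely different route from the paper's. The paper's proof (following \cite[eq.~(4.7.2)]{NW14}) pulls back the pointwise comparison $\rho_\alpha\,\sqrt{-1}\,dx_{\alpha j}\wedge d\bar{x}_{\alpha j}\leq C\omega$ by $f$ to get $|\rho_\alpha\circ f|^2|f'_{\alpha j}|^2\leq CB(t)$, where $f^*\omega=\sqrt{-1}B\,dt\wedge d\bar t$, and then estimates $\int\log^+B$ using Jensen's concavity inequality, a Fubini-type rewriting of $T_{f,\omega}$, and \emph{two} applications of Borel's growth lemma (\cref{lem:Borel}) — the classical ``calculus lemma'' technique of Nevanlinna theory. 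You instead observe that the globally defined function $\rho_\alpha\cdot x_{\alpha j}$ is bounded on $X$ (compact support of $\rho_\alpha$ inside the affine chart), factor $f'_{\alpha j}=f_{\alpha j}\cdot(\log f_{\alpha j})'$, absorb the bounded factor, and then invoke Noguchi's lemma (\cref{lem:Noguchi}) plus the order-function comparison (\cref{lem:decrease}). Both arguments are sound; the paper's is the ``standard'' one and proves the sharper conclusion that the left-hand side is controlled by $f^*\omega$ alone, independently of coordinates. Yours is more economical and structurally cleaner in the context of this particular theorem: it makes the tangential coordinates $j\leq s(\alpha)$ go through exactly the same pipeline (\cref{lem:Noguchi} followed by \cref{lem:decrease}) already used for the boundary coordinates $i>s(\alpha)$ in the main chain \eqref{eq:final}, avoiding any direct appeal to Borel's lemma. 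Your closing remark about where the cutoff must be absorbed is also well-placed; as you say, attaching $\rho_\alpha$ to $(\log f_{\alpha j})'$ instead of to $f_{\alpha j}$ would leave behind a proximity term $\frac{1}{2\pi}\int\log^+|f_{\alpha j}|$, which is only $O(T_{f,\omega}(r))$ and cannot be re-absorbed.
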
 
	\begin{proof}[Proof of \cref{claim}]
	The proof of the claim is borrowed from \cite[eq.(4.7.2)]{NW14}. Pick $C>0$ so that $\rho_\alpha\sn dx_{\alpha j}\wedge d\bar{x}_{\alpha j}\leq C\omega$. Write $f^*\omega:=\sn B(t)dt\wedge d\bar{t}$. Then
		\begin{align*}
		&\frac{1}{2\pi}\int_{0}^{2\pi}\log ^+|\rho_\alpha\circ f\cdot f_{\alpha j}'(re^{i \theta})|d\theta = \frac{1}{4\pi}\int_{0}^{2\pi}\log ^+(|\rho^2_\alpha\circ f |\cdot|f_{\alpha j}'(re^{i \theta})|^2)d\theta\\
		&\leq \frac{1}{4\pi}\int_{0}^{2\pi}\log ^+B(re^{i\theta})d\theta+O(1) \leq \frac{1}{4\pi}\int_{0}^{2\pi}\log (1+B(re^{i\theta}))d\theta+O(1)\\
		&\leq \frac{1}{2}\log (1+\frac{1}{2\pi}\int_{0}^{2\pi}   B(re^{i\theta})d\theta)+O(1) 
	 = \frac{1}{2}\log (1+\frac{1}{2\pi r}\frac{d}{d r}\int_{\mathbb{D}_r}   rBdrd\theta)+O(1)\\
		&= \frac{1}{2}\log (1+\frac{1}{2\pi r}\frac{d}{d r} \int_{\bD_r}f^*\omega)+O(1)\\
		&\stackrel{\cref{lem:Borel}}{\leq}  \frac{1}{2}\log (1+\frac{1}{2\pi r} (\int_{\bD_r}f^*\omega)^{1+\delta})+O(1)\quad \lVert\\
		&=  \frac{1}{2}\log (1+\frac{r^\delta}{2\pi } (\frac{d}{d r} T_{f,\omega}(r))^{1+\delta})+O(1)\quad \lVert\\ 
		&\stackrel{\cref{lem:Borel}}{\leq}   \frac{1}{2}\log (1+\frac{r^\delta}{2\pi } ( T_{f,\omega}(r))^{(1+\delta)^2})+O(1)\quad \lVert\\
		&\leq 4\log^+T_{f,\omega}(r)+\delta\log r+O(1)\quad \lVert.
		\end{align*}
		Here we pick $0<\delta<1$ to apply \cref{lem:Borel}. 
		The claim is proved.
	\end{proof}
Putting \eqref{eq:new} to \eqref{eq:final},  one obtains
	$$
	T_{f,\omega}(r)\leq C(\log^+T_{f,\omega}(r)+ \log r)+O(1)\quad \lVert 
	$$
	for some positive constant $C$. 
	Hence $T_{f,\omega}(r)=O(\log r)$.  We  apply \cref{lem:criteria} to conclude that $f$ extends to the $\infty$. 
\end{proof}

\section{The negatively curved metric via Viehweg-Zuo Higgs bundles}
In \cite{VZ02,VZ03}, Viehweg-Zuo  introduced a special type of Higgs bundles over the bases of smooth families of polarized manifolds with semi-ample canonical sheaves to study the hyperbolicity of such bases. It was later developed in \cite{PTW18}. For any quasi-projective manifold $V$ endowed with a Viehweg-Zuo Higgs bundle,   we construct in \cite{Den18,Den18b} a generically positively definite Finsler metric over $V$  whose  holomorphic sectional curvature is bounded from above by a negative constant. In this section we will refine curvature estimate in \cite{Den18} to prove   big Picard  theorem for such quasi-projective  manifold $V$. 
%\subsection{Notations on Finsler metrics}

%\begin{dfn}\label{HSC}	
%	Let \(X\) be a complex manifold endowed with  a Finsler metric \(h\). 
%	\begin{thmlist}
%		\item \label{def:sectional}For any \(x\in X \), and \(v\in  T_{X,x} \), let \([v]\) denote the complex line spanned by \(v\). We define the holomorphic sectional curvature \(K_{F,[v]}\) in the direction of \([v] \) by 
%		\[
%		K_{h,[v]}:= \sup K_{\gamma^*h^2}(0)
%		\]
%		where the supremum is taken over all \(\gamma:\bD\rightarrow X \) such that \(\gamma(0)=x\) and \([v]\)
%		is tangent to \(\gamma'(0)\).
%		\item \label{negatively curved}	We say that   $h$  is \emph{negatively curved} if  there is a
	%	positive constant \(c\) such that \(K_{h,[v]}\leqslant -c\) for all \(v\in T_{X,x} \) for which \(h(v)>0\).
%		\item \label{degenerate}	A point \(x\in X \) is   a \emph{degeneracy point} of \(h\) if \(h(v)=0\) for some
	%	nonzero \(v\in T_{X,x} \), and the set of such points  is denoted by  \(\Delta_{h} \).%, and  a \emph{total degeneracy point} of \(F\) if \(F(v)=0\) for all nonzero \(v\in \ts_{X,x} \).  
		%	The set of  degeneracy points is called  degeneracy set of \(X\),  and denoted by  \(\Delta_{F} \).
		%	\item We say \(X\) is \emph{hyperbolic modulo} a subset \(\Delta\subset X \) if  $\kappa_X$  is positive definite outside $\Delta$.  %for every pair of distinct points \((p,q)\) of \(X\) we have
		%	\(d_X(p,q)\) unless both are contained in \(\Delta \). 
%	\end{thmlist}
%\end{dfn}
%A complex manifold $X$ is called \emph{quasi-compact} if it is a Zariski open set of a compact complex manifold $\overline{X}$.
 
\subsection{Viehweg-Zuo Higgs bundles and their proper metrics}\label{sec:VZ}
The definition for Viehweg-Zuo Higgs bundles we present below follows from the formulation in \cite{VZ02,VZ03} and \cite{PTW18}. %We mainly follow   the approach of the construction of VZ Higgs bundles in \cite{VZ02,PTW18}, whereas \cref{iterate,injection}    follow  from \cite[Propositions 2.7 \& 2.11]{PTW18}.   Let us  stress here that we    pursue the  simplified spirit of  \cite{PTW18},  which relaxes the monodromy condition in \cite{VZ03}.
\begin{dfn}[Abstract Viehweg-Zuo Higgs bundles]\label{def:VZ}
	Let $V$ be a quasi-projective manifold, and let $Y\supset V$ be a projective compactification of $V$ with the boundary $D:=Y\setminus V$ simple normal crossing. A \emph{Viehweg-Zuo Higgs bundle}  over $(Y,D)$ (or say over $V$   abusively) is a logarithmic  Higgs bundle $(\tilde{\sE},\tilde{\theta})$  over $Y$  consisting of the following data:
	\begin{thmlist}
		\item  a divisor $S$ on $Y$ so that $D+S$ is simple normal crossing,
		\item \label{VZ big}  a big and nef line bundle $L$ over $Y$ with $\mathbf{B}_+(L)\subset D\cup S $, 
		\item   a Higgs bundle  $( {\sE}, {\theta}):=\big(\bigoplus_{q=0}^{n}E^{n-q,q},\bigoplus_{q=0}^{n}\theta_{n-q,q}\big)$ induced by the lower canonical extension of a polarized VHS defined over $Y\setminus (D\cup S)$, 
		\item a sub-Higgs sheaf $(\sF,\eta)\subset (\tilde{\sE},\tilde{\theta})$,
	\end{thmlist}
	which	satisfy the following properties.
	\begin{enumerate}[leftmargin=0.5cm]
		\item The Higgs bundle $(\tilde{\sE},\tilde{\theta}):=(L^{-1}\otimes {\sE},\vvmathbb{1}\otimes {\theta})$. In  particular, $\tilde{\theta}:\tilde{\sE}\to \tilde{\sE}\otimes \Omega_{Y}\big(\log (D+S)\big)$, and $\tilde{\theta}\wedge\tilde{\theta}=0$.
		\item The sub-Higgs sheaf $(\sF,\eta)$ has log poles only on the boundary $D$, that is, $\eta:\sF\to \sF\otimes\Omega_{Y}(\log D)$.
		\item \label{contain trivial}Write $\tilde{\sE}_k:=L^{-1}\otimes E^{n-k,k}$, and denote by $\sF_k:=\tilde{\sE}_k\cap \sF$. Then the first stage $\sF_0$ of $\sF$ is an \emph{effective line bundle}. In other words, there exists a non-trivial morphism $\cO_Y\to \sF_0$.
	\end{enumerate}
\end{dfn}
As shown in \cite{VZ02}, by iterating $\eta$ for $k$-times, we obtain
$$
\sF_0\xrightarrow{\overbrace{\eta\circ\cdots\circ \eta}^{k\, \text{times}}} \sF_k\otimes \big(\Omega_Y(\log D)\big)^{\otimes k}.
$$
Since $\eta\wedge\eta =0$, the above morphism factors through $ \sF_k\otimes {\rm Sym}^k\Omega_Y(\log D)$, and by \eqref{contain trivial} one thus obtains 
$$
\cO_Y\to \sF_0\to \sF_k\otimes {\rm Sym}^k\Omega_Y(\log D)\to L^{-1}\otimes E^{n-k,k}\otimes {\rm Sym}^k\Omega_Y(\log D).
$$
Equivalently, we have a morphism
\begin{align}\label{iterated Kodaira2}
\tau_k:  {\rm Sym}^k T_Y(-\log D)\rightarrow L^{-1}\otimes E^{n-k,k}.
\end{align}
It was proven in \cite[Corollary 4.5]{VZ02} that $\tau_1$ is always non-trivial.
In \cite{Den18b} we prove that $\tau_1:T_Y(-\log D)\to L^{-1}\otimes E^{n-1,1}$ in \eqref{iterated Kodaira2}  is  generically injective.  

We will follow \cite{PTW18} to give some \enquote{proper} metric on $\tilde{\sE}=\oplus_{k=0}^{n}L^{-1}\otimes E^{n-k,k}$.
Write the simple normal crossing divisor \(D=D_1+\cdots+D_k \) and \(S=S_1+\cdots+S_\ell \). Let \(f_{D_i}\in H^0\big(Y,\cO_Y(D_i)\big)\) and \(f_{S_i}\in H^0\big(Y,\cO_Y(S_i)\big)\) be the canonical section defining \(D_i\) and \(S_i\). We fix  smooth hermitian metrics  \(g_{D_i}\) and \(g_{S_i}\) on \(\cO_Y(D_i)\) and \(\cO_Y(S_i)\). After rescaling \(g_{D_i} \) and \(g_{S_j}\), we assume that  $|f_{D_i}|_{g_{D_i}}< 1$ and $|f_{S_j}|_{g_{S_j}}< 1$ for  $i=1,\ldots,k$ and $j=1,\ldots,\ell$. Set
\[
r_{D}:=\prod_{i=1}^{k}(- \log |f_{D_i}|^2_{g_{D_i}}), \quad r_{S}:=\prod_{j=1}^{\ell}(- \log | f_{S_j}|^2_{g_{S_j}}).
\]
Let $g$ be a singular hermitian metric with analytic singularities of the  big and nef  line bundle \(L\) such that \(g\) is smooth on $Y\setminus \mathbf{B}_+(L)\supset Y\setminus D\cup S$,  and the curvature current 
$
\sqrt{-1}\Theta_{g}(L)\geqslant \omega
$
for some smooth K\"ahler form \(\omega \) on $Y$. For \(\alpha\in \mathbb{N}\),  define
\[
h_L:=g\cdot (r_D\cdot r_S)^\alpha
\]
The following proposition is a slight variant of \cite[Lemma 3.1, Corollary 3.4]{PTW18}.\noindent
\begin{prop}[\!\protect{\cite{PTW18}}]\label{singular metric}
	When \(\alpha\gg 0 \), after rescaling \(f_{D_i} \) and \(f_{S_i}\), there exists a continuous, positively definite hermitian form \(\omega_\alpha\) on \(T_{Y}(-\log {D}) \) such that
	\begin{thmlist} 
		\item\label{estimate}  over $V_0:=Y\setminus D\cup S$,  the curvature form
		\[
		\sqrt{-1}\Theta_{h_L}(L)_{\upharpoonright V_0} \geqslant r_D^{-2}\cdot \omega_{\alpha\upharpoonright V_0},\quad 	\sqrt{-1}\Theta_{h_L}(L)\geq \omega
		\]
		where $\omega$ is a smooth K\"ahler metric on $Y$.
		\item \label{bounded} The singular hermitian metric \(h:=h_L^{-1}\otimes h_{\rm hod} \) on \(\tilde{\sE}=L^{-1}\otimes \sE\) is locally bounded  on \(Y\), and smooth outside \((D+S)  \), where $h_{\rm hod}$ is the Hodge metric for the Hodge bundle $\sE$. Moreover, \(h\) is degenerate  on \(D+S \). 
		\item \label{new bound} The singular hermitian metric  \(r_D^2h \) on  \(L^{-1}\otimes \sE\) is also locally bounded  on \(Y\) and is degenerate  on \(D+S \).  \qed
	\end{thmlist}
\end{prop}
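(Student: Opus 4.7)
The plan is to adapt the argument of \cite[Lemma 3.1 and Corollary 3.4]{PTW18} to this slightly more general setting, the only real change being that one must keep track separately of the roles of $D$ and $S$ in (i).

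First I would compute $\sqrt{-1}\Theta_{h_L}(L)$ directly from $h_L = g\cdot(r_Dr_S)^\alpha$. Setting $\psi^D_i := -\log|f_{D_i}|^2_{g_{D_i}}$ and $\psi^S_j := -\log|f_{S_j}|^2_{g_{S_j}}$, the identity
$$-\sqrt{-1}\partial\bar\partial \log \psi = \frac{\sqrt{-1}\,\partial\psi\wedge \bar\partial\psi}{\psi^2} - \frac{\sqrt{-1}\,\partial\bar\partial \psi}{\psi},$$
together with $\sqrt{-1}\partial\bar\partial \psi^D_i = \sqrt{-1}\Theta_{g_{D_i}}(\cO_Y(D_i))$ (smooth on $V_0$), yields
\begin{align*}
\sqrt{-1}\Theta_{h_L}(L) &= \sqrt{-1}\Theta_g(L) + \alpha\sum_{i}\left(\frac{\sqrt{-1}\,\partial\psi^D_i\wedge\bar\partial\psi^D_i}{(\psi^D_i)^2} - \frac{\sqrt{-1}\Theta_{g_{D_i}}}{\psi^D_i}\right) \\
&\quad + \alpha\sum_{j}\left(\frac{\sqrt{-1}\,\partial\psi^S_j\wedge\bar\partial\psi^S_j}{(\psi^S_j)^2} - \frac{\sqrt{-1}\Theta_{g_{S_j}}}{\psi^S_j}\right).
\end{align*}
In the logarithmic frame \eqref{eq:basis}, the $D$-Poincar\'e sum $\sum_i \frac{\sqrt{-1}\partial\psi^D_i\wedge\bar\partial\psi^D_i}{(\psi^D_i)^2}$ is comparable to $r_D^{-2}$ times a continuous positive definite Hermitian form on $T_Y(-\log D)$; this furnishes the required $\omega_\alpha$. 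After rescaling $f_{D_i}$ and $f_{S_j}$ so that $\psi^D_i, \psi^S_j$ are as large as desired, the subleading terms $-\alpha\sum \sqrt{-1}\Theta_{g_\bullet}/\psi^\bullet_\bullet$ are uniformly bounded by an arbitrarily small multiple of $\omega$, and combining this with $\sqrt{-1}\Theta_g(L)\geq \omega$ gives both inequalities in (i).

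For (ii) and (iii) the crucial input is the Cattani--Kaplan--Schmid asymptotic estimate: along the lower canonical extension of a polarized VHS, the Hodge norm of any local frame of $\sE$ is bounded above and below by polynomial functions of the $\psi^\bullet_\bullet$ (and therefore by $r_D, r_S$) near $D+S$. On the other hand, $|s|_{h_L}^{-2} = |s|_g^{-2}(r_Dr_S)^{-\alpha}$, where $|s|_g^{-2}$ has at worst polynomial growth in $|f_{D_i}|^{-1}, |f_{S_j}|^{-1}$ since $\mathbf{B}_+(L)\subset D\cup S$, while $(r_Dr_S)^{-\alpha}$ decays only polylogarithmically. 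For $\alpha$ large enough the polynomial vanishing of $|s|_g^2$ in $|f_\bullet|$ therefore dominates the polylogarithmic Hodge growth, so $h = h_L^{-1}\otimes h_{\rm hod}$ is locally bounded on $Y$, smooth on $V_0$, and degenerate on $D+S$. Multiplying $h$ by $r_D^2$ in (iii) only introduces a polylogarithmic factor and preserves all three properties.

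The main --- and really only --- difficulty is the bookkeeping: $\alpha$ must be chosen large enough for the curvature bound in (i), the $f_{D_i}, f_{S_j}$ rescaled strongly enough to absorb the subleading curvature corrections, and the CKS polynomial Hodge growth dominated by the polynomial vanishing of $g$. All of these constraints point in the compatible direction \enquote{$\alpha$ large and $|f_\bullet|$ small}, so the construction goes through exactly as in \cite{PTW18}.
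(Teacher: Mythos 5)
Your plan is the right one: the paper offers no proof of this proposition, only the citation to \cite[Lemma 3.1, Corollary 3.4]{PTW18}, and your reconstruction follows the same strategy --- a direct curvature computation for (i), and CKS norm asymptotics combined with the freedom to take $\alpha$ large for (ii) and (iii). The treatment of (i) is essentially correct.

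There is, however, a genuine directional error in your argument for (ii) and (iii). You claim that $|s|_g^{-2}$ ``has at worst polynomial growth in $|f_{D_i}|^{-1},|f_{S_j}|^{-1}$'' and then conclude by invoking the ``polynomial vanishing of $|s|_g^2$.'' These two assertions contradict each other, and both are in fact wrong. Since $g$ has analytic singularities and $\sqrt{-1}\Theta_g(L)\geq\omega>0$, its local weight $\varphi_g$ (with $g=e^{-\varphi_g}$) has the form $c\log\bigl(\sum|g_j|^2\bigr)+O(1)$ with $c\geq 0$; thus $\varphi_g\to-\infty$ along $\mathbf{B}_+(L)$, so $|s|_g^2=|s|^2e^{-\varphi_g}\to+\infty$ there and $|s|_g^{-2}$ is locally bounded on all of $Y$ (vanishing near $\mathbf{B}_+(L)$, smooth elsewhere). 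This is the opposite of what you wrote, and the distinction matters: if $|s|_g^{-2}$ really did grow polynomially in $|f|^{-1}$, no choice of $\alpha$ could save the argument, since $(r_Dr_S)^{-\alpha}$ decays only polylogarithmically in $|f|^{-1}$ and cannot compensate a genuine pole of $g^{-1}$. The mechanism that actually proves (ii) and (iii) is the one your CKS sentence gestures at but your final sentence misattributes: $h_{\rm hod}$ grows at most polynomially in the $\psi^\bullet_\bullet$ (equivalently, polylogarithmically in $|f_\bullet|^{-1}$), while $(r_Dr_S)^{-\alpha}=\bigl(\prod\psi^\bullet_\bullet\bigr)^{-\alpha}$ decays polynomially in the $\psi^\bullet_\bullet$; once $\alpha$ exceeds the CKS exponents, $(r_Dr_S)^{-\alpha}h_{\rm hod}$ is locally bounded and tends to $0$ on $D+S$, the bounded nonnegative factor $g^{-1}$ preserves both properties, and the extra $r_D^2$ in (iii) is absorbed by a further slight increase of $\alpha$.
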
 
Hence by \cref{def:Finsler}, $h$ and $r_D^2h$ are both Finsler metrics on $\tilde{\sE}$.
 \subsection{Construction of negatively curved Finsler metric}  
We adopt the same notations as \cref{sec:VZ} throughout this subsection. Assume that the log manifold $(Y,D)$ is endowed with a Viehweg-Zuo Higgs bundle. In \cite[\S 3.4]{Den18} we construct Finsler metrics $F_1,\ldots,F_n$ on $T_Y(-\log D)$ as follows.   % construct such the Finsler metric on \(\ts_Y(-\log D) \) via the VZ Higgs bundles $(\sE,\theta)= \big(\bigoplus_{q=0}^{n}E^{n-q,q},\theta_{n-q,q}\big)$  in \cref{Higgs bundle}. 
By \eqref{iterated Kodaira2},  for each \(k=1,\ldots,n \), there exists
\begin{eqnarray}\label{contain}
\tau_k: {\rm Sym}^k  T_{Y}(-\log D) \rightarrow  L^{-1}\otimes E^{n-k,k}.
\end{eqnarray}
Then it follows from \Cref{bounded} that the Finsler metric \(h \) on \(  L^{-1}\otimes E^{n-k,k} \) induces  a Finsler metric \(F_k\) on \( T_Y(-\log D) \) defined as follows: for any \(e\in   T_{Y}(-\log D)_y \),
\begin{eqnarray}\label{k-Finsler}
F_k(e):=  h\big(\tau_k(e^{\otimes k})\big)^{\frac{1}{k}}
\end{eqnarray}
Let $C\subset \bC$ be any open set of $\bC$.  For any \(\gamma:C\rightarrow  V \), one has
\begin{align} \label{eq:differential}
d\gamma: T_{C}\to \gamma^* T_V\hookrightarrow \gamma^* T_{Y}(-\log D).
\end{align}
 We	denote by \(\d_t:=\frac{\d}{\d t} \)  the canonical vector fields in \( C\subset \bC\),   \(\bar{\d}_t:=\frac{\d}{\d \bar{t}}\) its conjugate.  
The Finsler metric \(F_k\) induces a continuous Hermitian pseudo-metric on \(C \), defined by
\begin{eqnarray}\label{seminorm}
\gamma^*F_k^2=\sqrt{-1}G_k(t) dt\wedge d\bar{t}.
\end{eqnarray}
Hence $G_k(t)=|\tau_k\big(d\gamma(\d_t)^{\otimes k}\big)|^{\frac{2}{k}}_{h}$, where $\tau_k$ is defined in \eqref{iterated Kodaira2}.
 The reader might worried that all $G_k(t)$ will be identically equal to zero. In \cite[Theorem C]{Den18b}, we prove that \enquote*{generically} this cannot happen.
\begin{thm}[\protecting{\cite{Den18b}}]\label{Deng}
	There is a dense Zariski open set $V^\circ\subset V_0=Y\setminus(D+S)$ of $V^\circ$ so that $\tau_1:T_Y(-\log D)|_{V^\circ}\to L^{-1}\otimes E^{n-1,1}|_{V^\circ}$ is injective. \qed
\end{thm}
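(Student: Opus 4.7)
My plan is to prove the equivalent statement that the coherent kernel sheaf $\sK:=\ker(\tau_1)\subset T_Y(-\log D)$ has zero generic rank. Since $T_Y(-\log D)$ is locally free, this identifies the complement of the support of $\sK$ in $V_0$ with a Zariski dense open set $V^\circ$ on which $\tau_1$ is injective.

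The first step is an algebraic observation: if $v\in T_Y(-\log D)_y$ lies in $\sK_y$, then $\tau_k(v\cdot w_1\cdots w_{k-1})=0$ for all $w_i\in T_Y(-\log D)_y$ and every $k\geq 1$. Indeed $\tau_k$ is the adjoint of $\eta^{\circ k}\circ s_0:\sO_Y\to\sF_k\otimes\mathrm{Sym}^k\Omega_Y(\log D)$, where $s_0:\sO_Y\to\sF_0$ is the effective section built into \cref{def:VZ}. Writing $\eta=\sum_i\eta_i\,dz^i$ in a local cotangent frame, the integrability condition $\eta\wedge\eta=0$ is equivalent to the commutation relations $\eta_i\eta_j=\eta_j\eta_i$. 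Hence if $\eta_v(s_0)=\tau_1(v)=0$, then $\eta_v\eta_{w_1}\cdots\eta_{w_{k-1}}(s_0)=\eta_{w_1}\cdots\eta_{w_{k-1}}\eta_v(s_0)=0$. Consequently, on the Zariski open set where $\sK$ is a subbundle, each $\tau_k$ factors through $\mathrm{Sym}^k\bigl(T_Y(-\log D)/\sK\bigr)$.

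The second step is to derive a contradiction from the assumption that $\sK$ has positive generic rank $r\geq 1$. Dualizing the iterated maps yields nonzero morphisms
\begin{equation*}
L\longrightarrow E^{n-k,k}\otimes\mathrm{Sym}^k(\sK^\perp),\qquad\sK^\perp:=\bigl(T_Y(-\log D)/\sK\bigr)^\vee\subset\Omega_Y(\log D),
\end{equation*}
for every $k\geq 1$. The annihilator $\sK^\perp$ has generic rank $n-r<n$, so the generic rank of $\mathrm{Sym}^k\sK^\perp$ grows only like $k^{n-r-1}$, strictly slower than that of the full $\mathrm{Sym}^k\Omega_Y(\log D)$. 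The bigness of $L$ with $\mathbf{B}_+(L)\subset D+S$, however, carries a large amount of positivity that such constrained morphisms cannot accommodate; concretely, one can compare $\deg L|_C$ against the maximal slopes of $\mathrm{Sym}^k\sK^\perp\otimes E^{n-k,k}|_C$ on a general complete intersection curve $C\subset Y$, which should force $r=0$.

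The main obstacle will be this last quantitative step, which typically requires either a Bogomolov-type slope inequality for the sub-Higgs sheaf $(\sF,\eta)$ or a differential-geometric argument. I would favor the analytic route: the Finsler metrics $F_k$ from \eqref{k-Finsler} would all degenerate along $\sK$, and combining this degeneration with the curvature lower bound $\sqrt{-1}\Theta_{h_L}(L)\geq r_D^{-2}\omega_\alpha$ of \cref{estimate} should furnish a negative-curvature contradiction in the spirit of \cref{main1}. Executing this delicately—while handling the degeneracy of $h$ on $D+S$ recorded in \cref{bounded}—is the technical heart of the argument.
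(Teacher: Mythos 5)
The paper does not prove this statement — it cites \cite{Den18b} (Theorem C there) and records it with a \qed, so there is no in-paper argument to compare against. Evaluated on its own merits, your proposal does not yet constitute a proof.

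Your first two steps are sound. Reducing to showing that the coherent kernel sheaf $\sK=\ker(\tau_1)$ has zero generic rank is the right setup, and the observation that $\eta\wedge\eta=0$ makes the contraction operators commute, so that a vector $v$ killed by $\tau_1$ is killed by every $\tau_k$ in any symmetric product containing $v$ as a factor, is correct; this does yield the factorization of each $\tau_k$ through $\mathrm{Sym}^k\bigl(T_Y(-\log D)/\sK\bigr)$ on the open locus where $\sK$ is a subbundle. The difficulty is that the entire mathematical content of the theorem sits in your third step, and you offer two candidate mechanisms — a Bogomolov-type slope inequality for $(\sF,\eta)$ or a curvature contradiction along the lines of \cref{main1} — without carrying out either. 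The slope-growth heuristic as stated is not a proof: comparing the rank growth $\sim k^{n-r-1}$ of $\mathrm{Sym}^k\sK^\perp$ against ``the positivity of $L$'' ignores that any actual slope inequality involves the degrees of the Hodge pieces $E^{n-k,k}$ on the test curve, which must be controlled by Arakelov-type bounds you have not invoked, and that the asymptotics you cite concern ranks rather than degrees, so no contradiction falls out without substantial extra work. The curvature route likewise would have to handle simultaneously the degeneration of the Finsler pseudometrics $F_k$ exactly along the putative kernel directions and the unboundedness near $D+S$ recorded in \cref{bounded}; you acknowledge this is ``the technical heart of the argument.'' Acknowledging the gap does not close it. A complete argument must either supply the quantitative comparison you gesture at, or exploit the geometric origin of the Viehweg-Zuo Higgs bundle (e.g.\ the identification of $\tau_1$ with a twist of the Kodaira--Spencer map together with the quasi-finiteness of the moduli map) rather than attempt to derive generic injectivity from the abstract axioms of \cref{def:VZ} alone.
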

We now fix any  $\gamma:C\to V$ with  $\gamma(C)\cap V^\circ\neq \varnothing$.  By \Cref{bounded}, the metric $h$ for $L^{-1}\otimes \sE$ is smooth and positively definite over $V_0$. It then follows from  \cref{Deng} that $G_1(t)\not\equiv 0$.  Let $C^\circ\subset C$ be an (non-empty) open set whose  complement $C\setminus C^\circ$ is a \emph{discrete set} so that
\begin{itemize}
	\item  The image $\gamma(C^\circ)\subset V^\circ$.
	\item  For every $k=1,\ldots,n$,  either $G_k(t)\equiv 0$ on $C^\circ$ or $G_k(t)>0$ for any $t\in C^\circ$.
	\item $\gamma'(t)\neq0$ for any $t\in C^\circ$.
\end{itemize}
By the definition of $G_k(t)$, if $G_k(t)\equiv 0$ for some $k>1$, then $\tau_k(\d_t^{\otimes k})\equiv 0 $ where $\tau_k$ is defined in \eqref{iterated Kodaira2}. Note that one has $\tau_{k+1}(\d_t^{\otimes (k+1)})=\tilde{\theta}\big(\tau_k(\d_t^{\otimes k})\big)(\d_t)$,  where $\tilde{\theta}:L^{-1}\otimes \sE\to L^{-1}\otimes \sE\otimes \Omega_{Y}(\log (D+S))$ is defined in \cref{def:VZ}.  We thus conclude that $G_{k+1}(t)\equiv 0$. Hence it exists   $1\leq m\leq n$ so that the set $\{k\mid G_k(t)> 0 \mbox{ over }C^\circ\}=\{1,\ldots,m\}$, and $G_{\ell}(t)\equiv 0$ for all $\ell=m+1,\ldots,n$.  From now on, \emph{all the computations} are made over $C^\circ$.

In \cite{Den18} we proved the following curvature formula.
\begin{thm}[\protecting{ \cite[Proposition 3.12]{Den18}}]\label{thm:curvature}
	For $k=1,\ldots,m$, over $C^\circ$ one has
	\begin{align}  \label{eq:k=1}
	\frac{	\d^2\log G_1}{\d t\d \bar{t}}  	&\geq\Theta_{L,h_L}(\d_t,\bar{\d}_t) -  \frac{G_2^2}{G_1}    \quad  &\mbox{ if  }\ k=1, \\\label{eq:k>2}
	\frac{	\d^2\log G_k}{\d t\d \bar{t}}  	&\geq  \frac{1}{k}\Big( \Theta_{L,h_L}(\d_t, \bar{\d}_t)+  \frac{G_k^k}{G^{k-1}_{k-1}}  - \frac{G^{k+1}_{k+1}}{G^k_k}    \Big)  \quad  &\mbox{ if  }\ k>1. 
	\end{align} 
Here we make the convention that $G_{n+1}\equiv 0$ and $\frac{0}{0}=0$. We  also write $\d_t$ (resp. $\bar{\d}_t$) for $d\gamma(\d_t)$ (resp. ${d\gamma(\bar{\d}_t)}$) abusively, where $d\gamma$ is defined in \eqref{eq:differential}.   \qed
\end{thm}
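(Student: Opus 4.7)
The plan is to derive both inequalities from a single Poincaré--Lelong computation applied to the iterated Higgs-field images
$$
s_k:=\tau_k\bigl(d\gamma(\d_t)^{\otimes k}\bigr)\in \gamma^*(L^{-1}\otimes E^{n-k,k}),\qquad k=1,\dots,m,
$$
together with a ``boundary'' section $s_0\in\gamma^*(L^{-1}\otimes E^{n,0})$ defined as the image of $1\in\cO_Y$ under the inclusion $\cO_Y\to\sF_0\hookrightarrow L^{-1}\otimes E^{n,0}$ guaranteed by \cref{def:VZ}. By construction $G_k=|s_k|_h^{2/k}$, so $k\,\d_t\bar{\d}_t\log G_k=\d_t\bar{\d}_t\log|s_k|_h^2$, and the key iterative identity
$$
\tilde\theta(s_k)(\d_t)=s_{k+1}
$$
follows at once from the definition \eqref{iterated Kodaira2} of $\tau_{k+1}$ as an iterate of the Higgs field on $\sF_0$, with the convention $s_{n+1}:=0$ covering the top case $k=n$.

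The first main step is the standard Poincar\'e--Lelong inequality: for any non-vanishing holomorphic section $s$ of a smooth Hermitian bundle $(F,g)$,
$$
\d_t\bar{\d}_t\log|s|_g^2\;\geq\;-\frac{\langle\Theta_g(F)(\d_t,\bar{\d}_t)s,s\rangle_g}{|s|_g^2},
$$
the difference being the nonnegative Cauchy--Schwarz defect. Applying this to $s=s_k$ with the product metric $h=h_L^{-1}\otimes h_{\rm hod}$ and using the curvature splitting $\Theta_h=-\Theta_{h_L}(L)\otimes\mathrm{id}+\mathrm{id}\otimes\Theta_{h_{\rm hod}}(E^{n-k,k})$ immediately produces the $+\Theta_{L,h_L}(\d_t,\bar{\d}_t)$ term common to both \eqref{eq:k=1} and \eqref{eq:k>2}. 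The Hodge contribution is then controlled by Griffiths' curvature formula for the Hodge metric on a polarized VHS,
$$
\langle\Theta_{h_{\rm hod}}(E^{n-k,k})(\d_t,\bar{\d}_t)v,v\rangle_{h_{\rm hod}}=|\theta(\d_t)v|^2_{h_{\rm hod}}-|\theta^*(\bar{\d}_t)v|^2_{h_{\rm hod}},
$$
where $\theta$ lowers the Hodge index by one and its adjoint $\theta^*$ raises it by one. Plugging in $v=s_k$ and using $\tilde\theta(s_k)(\d_t)=s_{k+1}$ converts the first term into $|s_{k+1}|_h^2=G_{k+1}^{k+1}$, which after the overall sign flip contributes exactly $-G_{k+1}^{k+1}/G_k^k$ to the lower bound.

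For the favourable $\theta^*$-term I would derive a lower bound using the adjointness identity $\langle\tilde\theta(w)(\d_t),s_k\rangle_h=\langle w,\tilde\theta^*(\bar{\d}_t)s_k\rangle_h$. Taking $w=s_{k-1}$ and using $\tilde\theta(s_{k-1})(\d_t)=s_k$ gives
$$
|s_k|_h^2=\bigl\langle s_{k-1},\tilde\theta^*(\bar{\d}_t)s_k\bigr\rangle_h,
$$
whence Cauchy--Schwarz yields $|\tilde\theta^*(\bar{\d}_t)s_k|_h^2\geq |s_k|_h^4/|s_{k-1}|_h^2$, and hence its contribution to $\d_t\bar{\d}_t\log|s_k|_h^2$ is at least $G_k^k/G_{k-1}^{k-1}$. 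For $k>1$ this yields \eqref{eq:k>2} after dividing by $k$; for $k=1$ the same bound is still valid, but the extra nonnegative term $G_1/|s_0|_h^2$ is simply dropped to obtain the weaker inequality \eqref{eq:k=1} as stated. The one genuine technical obstacle is that $h$ is singular along $D+S$ and $\sE$ is the lower canonical extension there, so Griffiths' formula and the Poincar\'e--Lelong inequality a priori only apply on $V_0=Y\setminus(D+S)$; however by \cref{bounded} the metric is smooth and positively definite on $V_0$, and the choice of $C^\circ$ ensures $\gamma(C^\circ)\subset V^\circ\subset V_0$, so the pointwise computation is justified exactly on the locus where the inequalities are asserted.
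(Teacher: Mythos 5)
Your argument is correct and follows essentially the same route as the cited proof in \cite{Den18}: the Poincar\'e--Lelong inequality for the holomorphic sections $s_k=\tau_k(\gamma'(\partial_t)^{\otimes k})$ of $\gamma^*(L^{-1}\otimes E^{n-k,k})$, the curvature splitting into the $h_L$-part and the Hodge part, Griffiths' curvature formula for the polarized VHS with the identification $\tilde\theta(s_k)(\partial_t)=s_{k+1}$ turning the $\theta$-term into $-G_{k+1}^{k+1}/G_k^k$, and Cauchy--Schwarz applied to the adjoint pairing $|s_k|_h^2=\langle s_{k-1},\tilde\theta^*(\bar\partial_t)s_k\rangle_h$ giving the favourable term $G_k^k/G_{k-1}^{k-1}$, with the $k=1$ inequality obtained by dropping the nonnegative $\theta^*$-contribution. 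Your remark that the pointwise computation is legitimate on $C^\circ$ because $\gamma(C^\circ)\subset V^\circ\subset V_0$, where $h$ is smooth, is exactly the right justification for applying these formulas to the a priori singular metric.
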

Let us mention that  in \cite[eq. (3.3.58)]{Den18} we drop the term 
 $ \Theta_{L,h_L}(\d_t, \bar{\d}_t)$ in \eqref{eq:k>2}, though it can be easily seen from the proof of \cite[Lemma 3.9]{Den18}.  As we will see below, such a term is crucial in deriving the new curvature estimate. 

By \cref{thm:curvature} we see that the curvature of $F_k$ is   not the desired type \eqref{eq:condition} for applying  the criteria for big Picard theorem in  \cref{thm:Big Picard}.  
In \cite[\S 3.4]{Den18}, following ideas by \cite{TY14,Sch17} we introduce a new Finsler metric $F$ on $T_Y(-\log D)$ by taking convex sum in the following form
\begin{align}
F:=\sqrt{\sum_{k=1}^{n}k\alpha_kF_k^2}.
\end{align} 
where \(\alpha_1,\ldots,\alpha_n\in \mathbb{R}^+ \) are some constants  which will be fixed later. By \cref{Deng}, the set of degenerate points of  $F$ defined in \cref{def:Finsler}, denoted by $\Delta_F$,  is contained in a proper Zariski closed subset $Y\setminus V^\circ$.   In \cite[Theorem 3.8]{Den18} we prove that the holomorphic sectional curvature of $F$ is bounded from above by a negative constant.  Let us now prove a new curvature formula for $F$ in this section.

For the above \(\gamma:C\rightarrow  V \) with $\gamma(C)\cap V^\circ\neq \varnothing$, we write 
 $$
 \gamma^*F^2=\sqrt{-1}H(t)dt\wedge d\bar{t}.
 $$
 Then 
\begin{align}\label{eq:H}
H(t)=\sum_{k=1}^{n}  {{k\alpha_k}G_k}(t), 
\end{align} where \(G_k\) is defined in \eqref{seminorm}.  Recall that for $k=1,\ldots,m$,  $G_k(t)> 0$ for $t\in C^\circ$.  

We first recall a computational lemma by Schumacher.
\begin{lem}[\protecting{\cite[Lemma 17]{Sch17}}]
		Let \(\alpha_j>0 \) and $G_j$ be positive real numbers for \(j=1,\ldots,n \). Then 
		\begin{align}\nonumber
		&\sum_{j=2}^{n}\Big(\alpha_{j}\frac{G_j^{j+1}}{G_{j-1}^{j-1}}-\alpha_{j-1}\frac{G_j^{j}}{G_{j-1}^{j-2}} \Big)\\\label{final}
		&\geqslant \frac{1}{2}\Bigg(-\frac{\alpha_1^3}{\alpha_2^2}G_1^2+\frac{\alpha_{n-1}^{n-1}}{\alpha_n^{n-2}}G_n^2+\sum_{j=2}^{n-1}\bigg(\frac{\alpha_{j-1}^{j-1}}{\alpha_j^{j-2}} -\frac{\alpha_{j}^{j+2}}{\alpha_{j+1}^{j+1}}\bigg)G_j^2 \Bigg)
		\end{align}   \qed
\end{lem}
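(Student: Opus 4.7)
The inequality is homogeneous of degree $2$ in $(G_{1},\ldots,G_{n})$, and the $j$-th summand on the left involves only the two consecutive quantities $G_{j-1}$ and $G_{j}$. The plan is therefore to establish a \emph{per-summand} estimate of the shape
\begin{align*}
\alpha_{j}\frac{G_{j}^{j+1}}{G_{j-1}^{j-1}}-\alpha_{j-1}\frac{G_{j}^{j}}{G_{j-1}^{j-2}}\ \geq\ A_{j}G_{j}^{2}-B_{j}G_{j-1}^{2}\qquad(j=2,\ldots,n),
\end{align*}
and then to telescope in $j$. The only reasonable choice of constants is the one forced by matching the right-hand side of~\eqref{final}: take $A_{j}:=\alpha_{j-1}^{j-1}/(2\alpha_{j}^{j-2})$ and $B_{j}:=\alpha_{j-1}^{j+1}/(2\alpha_{j}^{j})$. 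Reindexing gives
\begin{align*}
\sum_{j=2}^{n}\bigl(A_{j}G_{j}^{2}-B_{j}G_{j-1}^{2}\bigr)\ =\ -B_{2}G_{1}^{2}+A_{n}G_{n}^{2}+\sum_{k=2}^{n-1}\bigl(A_{k}-B_{k+1}\bigr)G_{k}^{2},
\end{align*}
which coincides term by term with the bracketed expression in~\eqref{final}.

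It then remains only to verify the per-summand bound. Introducing the single positive variable $w:=\alpha_{j}G_{j}/(\alpha_{j-1}G_{j-1})$ and clearing a positive common factor, the inequality collapses to the one-variable claim
\begin{align*}
p_{j}(w)\ :=\ 2w^{j+1}-2w^{j}-w^{2}+1\ \geq\ 0\qquad(w>0).
\end{align*}
One first checks the factorization $p_{j}(w)=(w-1)\bigl(2w^{j}-w-1\bigr)$, and then $2w^{j}-w-1=(w-1)\bigl(2w^{j-1}+2w^{j-2}+\cdots+2w+1\bigr)$, so that
\begin{align*}
p_{j}(w)\ =\ (w-1)^{2}\bigl(2w^{j-1}+2w^{j-2}+\cdots+2w+1\bigr),
\end{align*}
which is manifestly nonnegative on $(0,\infty)$ (with equality exactly when $\alpha_{j}G_{j}=\alpha_{j-1}G_{j-1}$).

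In summary, the whole argument reduces to (i)~identifying the correct split $(A_{j},B_{j})$ and (ii)~the one-line factorization above. The only non-routine point is (i), and it is forced by reading off the $G_{k}^{2}$ coefficients in~\eqref{final}; once it is fixed, the substitution $w=\alpha_{j}G_{j}/(\alpha_{j-1}G_{j-1})$ and the factorization are mechanical. I therefore do not expect any serious obstacle beyond this initial bookkeeping.
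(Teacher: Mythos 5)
Your proof is correct. Note that the paper itself does not prove this inequality -- it cites \cite[Lemma 17]{Sch17} and records the statement with a closing \(\square\) -- so there is no in-paper argument to compare against. Your self-contained verification is clean: I checked that the reindexed telescoping sum \(\sum_{j=2}^{n}(A_{j}G_{j}^{2}-B_{j}G_{j-1}^{2}) = -B_{2}G_{1}^{2}+A_{n}G_{n}^{2}+\sum_{k=2}^{n-1}(A_{k}-B_{k+1})G_{k}^{2}\) with \(A_{j}=\alpha_{j-1}^{j-1}/(2\alpha_{j}^{j-2})\), \(B_{j}=\alpha_{j-1}^{j+1}/(2\alpha_{j}^{j})\) reproduces the right-hand side of \eqref{final} exactly, and that the substitution \(w=\alpha_{j}G_{j}/(\alpha_{j-1}G_{j-1})\) reduces each per-summand bound, after dividing by the common positive factor \(\alpha_{j-1}^{j+1}\alpha_{j}^{-j}G_{j-1}^{2}\), to \(2w^{j+1}-2w^{j}-w^{2}+1\geq 0\); the factorization \((w-1)^{2}(2w^{j-1}+\cdots+2w+1)\) expands correctly and is nonnegative for \(w>0\). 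The reduction to a single-variable polynomial inequality via the natural homogeneity substitution, followed by telescoping, is exactly the kind of argument one expects here and it closes the gap left by the citation.
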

Now we are ready to compute the curvature of the Finsler metric $F$ based on \cref{thm:curvature}.
\begin{thm}  Fix a smooth K\"ahler metric $\omega$ on $Y$.	There exist  \emph{universal} constants \(0<\alpha_1<\ldots<\alpha_n\) and \(\delta>0\), such that for any $\gamma:C\to V$ with $C$ an open set of $\bC$ and $\gamma(C)\cap V^\circ\neq \varnothing$, one has
\begin{align}\label{eq:estimatefinal}
\hess \log|\gamma'(t)|_{F}^2\geq \delta\gamma^*\omega 
\end{align} 
\end{thm}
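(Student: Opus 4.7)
\medskip
\noindent\textbf{Plan.} The strategy is to compute $\hess\log H$, where $H(t):=\sum_{k=1}^{m} k\alpha_k G_k(t)$ on $C^\circ$ (with $m\le n$ the largest index for which $G_k\not\equiv 0$), by combining log-sum concavity with the curvature formulas of \cref{thm:curvature}, and then reorganizing the cross terms via \eqref{final}. The new ingredient---absent in the holomorphic sectional curvature computation of \cite{Den18}---is to \emph{retain} the $\Theta_{L,h_L}(\d_t,\bar\d_t)$ contributions coming from \eqref{eq:k=1}--\eqref{eq:k>2}, and exploit \emph{both} lower bounds from \cref{estimate} on $\sqrt{-1}\Theta_{h_L}(L)$: the bound $\ge\omega$ will produce the $\gamma^*\omega$ on the right, while the enhanced bound $\ge r_D^{-2}\omega_\alpha$ will be used to absorb the negative $G_1^2$ term that Schumacher inevitably leaves behind.

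\medskip
\noindent\textbf{Main computation.} Cauchy--Schwarz yields $H\cdot\hess\log H \ge \sum_{k=1}^m k\alpha_k G_k\,\hess\log G_k$. Plugging in \eqref{eq:k=1}--\eqref{eq:k>2} and writing $\Theta:=\Theta_{L,h_L}(\d_t,\bar\d_t)$, the $\Theta$-contributions collect to $\bigl(\sum_{k=1}^m\alpha_k G_k\bigr)\Theta$; after a routine index shift (with the convention $G_0:=1$ so that the $-\alpha_1 G_2^2$ from the $k=1$ term becomes the $k=2$ summand) the remaining cross terms equal the Schumacher-type expression $\sum_{k=2}^m\bigl(\alpha_k G_k^{k+1}/G_{k-1}^{k-1} - \alpha_{k-1}G_k^k/G_{k-1}^{k-2}\bigr)$. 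Applying \eqref{final} (with $n\mapsto m$) and choosing the universal constants $0<\alpha_1<\cdots<\alpha_n$---for instance $\alpha_k=A^k$ with $A>1$---so that every middle Schumacher coefficient $\alpha_{k-1}^{k-1}/\alpha_k^{k-2}-\alpha_k^{k+2}/\alpha_{k+1}^{k+1}$ is non-negative (an easy log-convexity condition on $\log\alpha_k$), I obtain
\[
H\cdot\hess\log H \;\ge\; \Bigl(\sum_{k=1}^m\alpha_k G_k\Bigr)\Theta \;-\; \tfrac{\alpha_1^3}{2\alpha_2^2}\,G_1^2.
\]

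\medskip
\noindent\textbf{Key geometric input and conclusion.} The crucial observation is that, by \cref{new bound}, the metric $r_D^2 h$ on $L^{-1}\otimes\sE$ is locally bounded on $Y$; combined with the continuous morphism $\tau_1\colon T_Y(-\log D)\to L^{-1}\otimes E^{n-1,1}$ and the positive-definiteness of $\omega_\alpha$ on $T_Y(-\log D)$, this produces a universal constant $C>0$ with $F_1^2(v)\le C\,r_D^{-2}\,\omega_\alpha(v,\bar v)$ for every $v\in T_Y(-\log D)|_{V_0}$. Evaluating at $v=d\gamma(\d_t)$ and using $\Theta\ge r_D^{-2}\omega_\alpha(d\gamma(\d_t),\overline{d\gamma(\d_t)})$ from \cref{estimate}, I deduce the pointwise comparison $G_1(t)\le C\,\Theta(t)$ on $C^\circ$. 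Hence $\tfrac{\alpha_1^3}{2\alpha_2^2}G_1^2\le\tfrac{\alpha_1^3 C}{2\alpha_2^2}G_1\Theta$; additionally imposing $\alpha_2^2\ge\alpha_1^2 C$ (compatible with the previous log-convexity choice by taking $A$ sufficiently large) makes this at most $\tfrac{\alpha_1}{2}G_1\Theta$, which is absorbed into the positive $\bigl(\sum\alpha_k G_k\bigr)\Theta$ term. Writing $\gamma^*\omega=\sn B(t)\,dt\wedge d\bar t$ and using $\Theta\ge B$ together with $\sum_{k=1}^m\alpha_k G_k\ge H/n$, I conclude $\hess\log H\ge\tfrac{1}{2n}\,\gamma^*\omega$ on $C^\circ$. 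Since $C\setminus C^\circ$ is discrete the inequality extends to all of $C$ as an inequality of currents, so $\delta:=1/(2n)$ works.

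\medskip
\noindent\textbf{Main obstacle.} The hard step is the pointwise comparison $G_1\le C\,\Theta$: one must match the boundary blow-up rate $r_D^{-2}$ on both sides---it arises on one side from the degeneration $h\le C\,r_D^{-2}$ of the Hodge-theoretic metric (\cref{new bound}), and on the other side from the enhanced curvature bound on $\sqrt{-1}\Theta_{h_L}(L)$ (\cref{estimate}). This matching is precisely what allows the curvature of $L$ alone to dominate \emph{both} a copy of $\gamma^*\omega$ and the negative Schumacher $G_1^2$ term; equivalently, it is exactly why retaining the $\Theta_{L,h_L}$-contribution in \eqref{eq:k>2}---discarded in \cite{Den18}---upgrades the estimate from $\gamma^*F^2$ to the stronger $\gamma^*\omega$ required by the big Picard criterion \cref{main1}.
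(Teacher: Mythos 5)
Your proof is correct and follows essentially the same route as the paper's: log-sum concavity (the paper's \eqref{calculus}), Schumacher's rearrangement lemma \eqref{final}, the pointwise comparison $G_1\leq C\,\Theta_{L,h_L}(\d_t,\bar{\d}_t)$ --- which is precisely the paper's \cref{claim2}, namely $\Theta_{L,h_L}(\d_t,\bar{\d}_t)\geq c_0 G_1$, obtained by matching the $r_D^{-2}$ degeneration rates via \cref{estimate} and \cref{new bound} --- and retention of the $\Theta_{L,h_L}$ contribution from \eqref{eq:k=1}--\eqref{eq:k>2} so that a piece absorbs the negative $G_1^2$ remainder while the rest dominates $\gamma^*\omega$. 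The only cosmetic differences are your explicit choice $\alpha_k=A^k$ versus the paper's inductive choice of the $\alpha_j$, and the slightly weaker constant $\delta=1/(2n)$ in place of $1/n$.
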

\begin{proof}
	By \cref{Deng} and the assumption that $\gamma(C)\cap V^\circ\neq \varnothing$, $G_1(t)\not\equiv0$. 
%Write $c_i(t):=\sn\Theta_{L,h_L}(\d_t,\bar{\d}_t)/G_i(t)$.  
We first recall a  result in   \cite[Lemma 3.11]{Den18}, and we write its proof here for it is crucial in what follows.
\begin{claim}\label{claim2}
There is a universal constant $c_0>0$ (\emph{i.e.} it does not depend on $\gamma$) so that $ \Theta_{L,h_L}(\d_t,\bar{\d}_t)\geq c_0G_1(t)$ for all $t$.
\end{claim}
\begin{proof}[Proof of \Cref{claim2}]
	Indeed, by \Cref{estimate}, it suffice to prove that  
\begin{eqnarray}\label{strict positive}
\frac{|\d_t|^2_{\gamma^*(r_D^{-2}\cdot \omega_\alpha)}}{|  \tau_1(d\gamma(\d_t))|_{h}^2}\geqslant c_0
\end{eqnarray}
for some $c_0>0$, where \(\omega_\alpha \) is a positively definite Hermitian metric on \(T_{Y}(-\log D )\). 
Note that
\[
\frac{|\d_t|^2_{\gamma^*(r_D^{-2}\cdot \omega_\alpha)}}{|  \tau_1(d\gamma(\d_t))|_{h}^2}=\frac{|\d_t|^2_{\gamma^*(r_D^{-2}\cdot \omega_\alpha)}}{|\d_t|^2_{\gamma^*\tau_1^*h}}=\frac{|\d_t|^2_{\gamma^*( \omega_\alpha)}}{|\d_t|^2_{\gamma^*\tau_1^*(r_D^2\cdot h)}},
\]
where \(\tau_1^*( r_D^{2}\cdot h) \) is a Finsler metric (indeed continuous pseudo hermitian metric) on \(T_{Y}(-\log D )\) by \Cref{new bound}.
  Since \(Y\) is compact,    there exists a constant  \(c_0>0\)  such that
\[
\omega_\alpha\geqslant c_0\tau_1^*( r_D^{2}\cdot h).
\] 
Hence  \eqref{strict positive} holds for any $\gamma:C\to V$ with $\gamma(C)\cap V^\circ\neq \varnothing$. The claim is proved.
\end{proof}

By \cite[Lemma 8]{Sch12}, 
		\begin{eqnarray}\label{calculus}
	\sqrt{-1}\d\bar{\d}\log (\sum_{j=1}^{n}{j\alpha_j}G_j )\geqslant \frac{\sum_{j=1}^{n}j\alpha_jG_j\sqrt{-1}\d\bar{\d}\log G_j}{\sum_{i=1}^{n}j\alpha_jG_i}
	\end{eqnarray}
Putting \eqref{eq:k=1} and \eqref{eq:k>2} to \eqref{calculus}, and making the convention that $\frac{0}{0}=0$, we obtain
	\begin{align*}\nonumber
	 \frac{\d^2\log H(t)}{\d t\d \bar{t}}&\geq  \frac{1}{H}\Big(-\alpha_1G_2^2+ \sum_{k=2}^{n}\alpha_k\big(\frac{G_k^{k+1}}{G^{k-1}_{k-1}}  - \frac{G^{k+1}_{k+1}}{G^{k-1}_{k}}   \big)\Big)+\frac{\sum_{k=1}^{n}\alpha_kG_k}{H} \Theta_{L,h_L}(\d_t,\bar{\d}_{t})\\
	 %&\frac{\alpha_1G^2_1}{H}\bigg(c_1(t)- \Big(\frac{G_2}{G_1}\Big)^2\bigg)+\frac{1}{H}\sum_{j=2}^{n} \alpha_jG^2_j  \bigg(c_j(t)+\Big(\frac{G_j}{G_{j-1}} \Big)^{j-1}-\Big(\frac{G_{j+1}}{G_j} \Big)^{j+1}  \bigg)\\\nonumber
		&= \frac{1}{H}\bigg( \sum_{j=2}^{n}\Big(\alpha_j\frac{G_j^{j+1}}{G_{j-1}^{j-1}}-\alpha_{j-1}\frac{G_j^{j}}{G_{j-1}^{j-2}} \Big) \bigg)+\frac{\sum_{k=1}^{n}\alpha_kG_k}{H} \Theta_{L,h_L}(\d_t,\bar{\d}_{t})\\\nonumber
		&\stackrel{\eqref{final}}{\geq}   \frac{1}{H}\bigg(-\frac{1}{2}\frac{\alpha_1^3}{\alpha_2^2}G_1^2+\frac{1}{2}\sum_{j=2}^{n-1}\Big(\frac{\alpha_{j-1}^{j-1}}{\alpha_{j}^{j-2}}-\frac{\alpha_j^{j+2}}{\alpha_{j+1}^{j+1}} \Big)G_j^2+\frac{1}{2}\frac{\alpha_{n-1}^{n-1}}{\alpha_n^{n-2}}G_n^2  \bigg) \\\nonumber
		&+\frac{\sum_{k=1}^{n}\alpha_kG_k}{H} \Theta_{L,h_L}(\d_t,\bar{\d}_{t})\\
		&\stackrel{\Cref{claim2}}{\geq} \frac{1}{H}\bigg(\frac{\alpha_1}{2}(c_0-\frac{\alpha_1^2}{\alpha_2^2})G_1^2+\frac{1}{2}\sum_{j=2}^{n-1}\Big(\frac{\alpha_{j-1}^{j-1}}{\alpha_{j}^{j-2}}-\frac{\alpha_j^{j+2}}{\alpha_{j+1}^{j+1}} \Big)G_j^2+\frac{1}{2}\frac{\alpha_{n-1}^{n-1}}{\alpha_n^{n-2}}G_n^2  \bigg) \\\nonumber
		&+\frac{1}{H}(\frac{1}{2}\alpha_1G_1+\sum_{k=2}^{n}\alpha_kG_k) \Theta_{L,h_L}(\d_t,\bar{\d}_{t}) 
	\end{align*}
	One can take \(\alpha_1=1 \), and choose the further \(\alpha_j>\alpha_{j-1} \) inductively   so that 
\begin{align}\label{eq:alpha}
c_0-\frac{\alpha_1^2}{\alpha_2^2}>0, \quad \frac{\alpha_{j-1}^{j-1}}{\alpha_{j}^{j-2}}-\frac{\alpha_j^{j+2}}{\alpha_{j+1}^{j+1}} >0 \quad \forall \ j=2,\ldots,n-1. 
\end{align}
	Hence 
	$$
	 \frac{\d^2\log H(t)}{\d t\d \bar{t}}\geq   \frac{1}{H}(\frac{1}{2}\alpha_1G_1+\sum_{k=2}^{n}\alpha_kG_k) \Theta_{L,h_L}(\d_t,\bar{\d}_{t}) \stackrel{\eqref{eq:H}}{\geq}\frac{1}{n}\Theta_{L,h_L}(\d_t,\bar{\d}_{t}) 
	$$ 
	over $C^\circ$. 
 By \Cref{estimate}, this implies that 
\begin{align}\label{eq:curvature esti}
\hess \log |\gamma'|_{F}^2=  \hess \log H(t)\geq \frac{1}{n}\gamma^*\sn\Theta_{L,h_L}\geq \delta\gamma^*\omega
\end{align}
 over $C^\circ$ for some positive constant $\delta$, which does not depend on $\gamma$.  Since $|\gamma'(t)|_F^2$ is continuous and locally bounded from above over   $C$,  by the extension theorem of  subharmonic function, \eqref{eq:curvature esti}  holds over the whole $C$.   Since $c_0>0$ is a constant which does not depend on $\gamma$, so are $\alpha_1,\ldots,\alpha_n$ by  \eqref{eq:alpha}.  The theorem is thus proved.
\end{proof}
In summary of results in this subsection, we obtain the following theorem.
\begin{thm}\label{thm:uniform}
	Let $X$ be a projective manifold  and let $D$ be a simple normal crossing divisor on $X$. Assume that there is a Viehweg-Zuo Higgs bundle over $(X,D)$. Then there are a Finsler metric $h$ on $T_X(-\log D)$ which is positively definite on a dense Zariski open set $V^\circ$ of $V:=X\setminus D$, and a smooth K\"ahler form on $X$ such that   for any holomorphic map $\gamma:C\to V$  from any open subset $C$ of $ \bC$ with $\gamma(C)\cap V^\circ\neq \varnothing$, one has
$$
\hess \log |\gamma'|_h^2\geq \gamma^*\omega.
$$ 
In particular, by \cref{main1},	   for any holomorphic map   $f:\Delta^*\to X\setminus D$, with $f(\Delta^*)\cap V^\circ\neq \varnothing$,  it extends to a holomorphic map $\bar{f}:\Delta\to X$. \qed
\end{thm}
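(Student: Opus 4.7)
The plan is to combine the differential geometric criterion of \cref{main1} with a careful refinement of the Finsler metric construction from the Viehweg-Zuo Higgs bundle. First I would recall the iterated Kodaira-Spencer maps $\tau_k: {\rm Sym}^k T_X(-\log D) \to L^{-1} \otimes E^{n-k,k}$ from \eqref{iterated Kodaira2}, the proper metric $h$ on $\tilde{\sE}$ furnished by \cref{singular metric}, and the induced Finsler pseudo-metrics $F_k(e) := h(\tau_k(e^{\otimes k}))^{1/k}$ on $T_X(-\log D)$. By \cref{Deng}, $\tau_1$ is injective on a dense Zariski open set $V^\circ \subset V$, so $F_1$ is positively definite there.

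Next I would form the convex combination $F := \sqrt{\sum_{k=1}^n k \alpha_k F_k^2}$ with positive constants $\alpha_1 < \cdots < \alpha_n$ to be fixed, and for any holomorphic $\gamma: C \to V$ meeting $V^\circ$ write $\gamma^* F_k^2 = \sn G_k(t)\,dt \wedge d\bar t$ and $\gamma^* F^2 = \sn H(t)\,dt \wedge d\bar t$. Applying \cite[Lemma 8]{Sch12} to pass $\hess \log$ through the sum, and then feeding in the curvature inequalities \eqref{eq:k=1}--\eqref{eq:k>2} from \cref{thm:curvature}, I would crucially \emph{retain} the $\Theta_{L,h_L}(\d_t,\bar\d_t)$ contributions that were dropped in \cite{Den18}. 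The cross terms of the form $G_j^{j+1}/G_{j-1}^{j-1}$ and $G_j^j/G_{j-1}^{j-2}$ are then handled by Schumacher's telescoping identity \eqref{final}, which collapses them into a sum of $G_j^2$ terms plus an error of $-\tfrac{1}{2}\alpha_1^3/\alpha_2^2 \cdot G_1^2$.

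The central new ingredient is \cref{claim2}: because $Y$ is compact and $\tau_1^*(r_D^2 h)$ is a bounded continuous pseudo-metric on $T_Y(-\log D)$ by \cref{new bound}, there is a \emph{universal} constant $c_0>0$ with $\Theta_{L,h_L}(\d_t,\bar\d_t) \geq c_0 G_1(t)$. The main obstacle is then the combinatorial one of choosing $\alpha_1 = 1 < \alpha_2 < \cdots < \alpha_n$ inductively so that both $c_0 - \alpha_1^2/\alpha_2^2 > 0$ and $\alpha_{j-1}^{j-1}/\alpha_j^{j-2} - \alpha_j^{j+2}/\alpha_{j+1}^{j+1} > 0$ for every $j$. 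With such a choice, the only potentially negative term in Schumacher's estimate is absorbed by the $\alpha_1 c_0 G_1^2$ extracted from the curvature of $L$, and the leftover positive mass from $\sum_k \alpha_k G_k \cdot \Theta_{L,h_L}(\d_t,\bar\d_t)$, combined with the uniform lower bound $\Theta_{L,h_L}\geq \omega$ of \cref{estimate}, gives
\[
\hess \log H(t) \geq \tfrac{1}{n}\, \gamma^* \sn\Theta_{L,h_L}(L) \geq \delta\, \gamma^* \omega
\]
on the open subset $C^\circ \subset C$ where each $G_k$ is either identically zero or everywhere positive.

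To finish, I would observe that $|\gamma'|_F^2$ is continuous and locally bounded above on $C$, so the extension theorem for subharmonic functions propagates the inequality across the discrete set $C \setminus C^\circ$. The universality of $c_0$ (and hence of the $\alpha_j$ and $\delta$) ensures the estimate is uniform in $\gamma$. The big Picard conclusion for $f: \Delta^* \to V$ with $f(\Delta^*) \cap V^\circ \neq \varnothing$ is then an immediate application of \cref{main1} to $h := F$, after rescaling $\omega$ by a positive constant to absorb $\delta$ and the factor $\pi$ in \eqref{eq:condition}.
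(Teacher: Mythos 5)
Your proposal follows essentially the same line of argument as the paper's own proof: the same Finsler pseudo-metrics $F_k$ built from the iterated Kodaira--Spencer maps, the same convex combination $F = \sqrt{\sum_k k\alpha_k F_k^2}$, the same appeal to Schumacher's lemmas, the same key observation (\cref{claim2}) that $\Theta_{L,h_L}(\d_t,\bar\d_t) \geq c_0 G_1(t)$ with $c_0$ \emph{universal} because $Y$ is compact and $\tau_1^*(r_D^2 h)$ is continuous, the same inductive choice of the $\alpha_j$, and the same extension across the discrete exceptional set $C\setminus C^\circ$ via subharmonicity. You correctly identify all the load-bearing steps, including the need to retain the $\Theta_{L,h_L}$ terms that were discarded in \cite{Den18} and the final $1/n$ lower bound on $\sum_k\alpha_k G_k/H$.
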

\section{Generalized Big Picard theorems}
We will apply \cref{thm:Big Picard,thm:uniform} to prove the big Picard theorem   for moduli of polarized manifolds with semi-ample canonical sheaf, and for bases of log smooth families of  Calabi-Yau pairs.
\subsection{Big Picard theorem for moduli of polarized manifolds}  
\begin{proof}[Proof of \cref{main}]
Let $Z$ be the Zariski closure of $\gamma(\Delta^*)$ in $Y$. Take an embedded desingularization of singularities $\mu:\tilde{Y}\to Y$ so that the strict transform  of $Z$, denoted by $\tilde{Z}$, is smooth. 
 Write $\tilde{Z}^\circ:= \tilde{Z}\cap \mu^{-1}(V)$, which is a dense Zariski open set of $\tilde{Z}$. We take the base change
 \begin{equation*}
 \begin{tikzcd}
 X^\circ=U\times_V\tilde{Z}^\circ \arrow[r] \arrow[d,"f_{X^\circ}"'] & U \arrow[d,"f_U"]\\
 \tilde{Z}^\circ \arrow[r,"\iota"] & V 
 \end{tikzcd}
 \end{equation*}
 Then  polarized family $(f_{X^\circ}:X^\circ\to \tilde{Z}^\circ,\iota^*\sL)\in \sP_h({\tilde{Z}^\circ})$. We denote by  $\varphi_{X^\circ}:\tilde{Z}^\circ\to P_h$   the moduli map  associated to $f_{X^\circ}$. Then $\varphi_{X^\circ}=\varphi_U\circ\iota$, which is generically finite. Hence $f_{X^\circ}$ is of maximal variation. By \cite{VZ02,PTW18}, after passing to a birational modification $\nu:W\to \tilde{Z}$,  there exists a Viehweg-Zuo Higgs bundle on   $W^\circ:=\nu^{-1}(\tilde{Z}^\circ)$. By \cref{thm:uniform}, there is a dense Zariski open set $W'\subset W^\circ$ so that any holomorphic map $\Delta^*\to W^\circ$ extends to $\Delta\to W$ provided that its image is not contained in $W\setminus W'$.  Since   $\gamma:\Delta^*\to Z$ is Zariski dense, it thus does not lie on the discriminant locus of the birational morphism $\mu|_{\tilde{Z}}\circ \nu:W\to Z$, and thus $\tilde{\gamma}=(\mu|_{\tilde{Z}}\circ \nu)^{-1}\circ\gamma:\Delta^*\to W$ exists with its image contained in $W^\circ$.  Moreover, $\tilde{\gamma}:\Delta^*\to W$   is also Zariski dense, and thus $\tilde{\gamma}(\Delta^*)\cap W'\neq \varnothing$. By \cref{thm:Big Picard}, $\tilde{\gamma}:\Delta^*\to W$ extends to a holormorphic map  $\overline{\tilde{\gamma}}:\Delta\to W$. The holomorphic map $\mu\circ \nu\circ\overline{\tilde{\gamma}}:\Delta\to Y$ is the desired extension of $\gamma:\Delta^*\to V$.  The theorem is proved.
\end{proof}
\begin{rem}\label{rem:LSZ}
	Based on the fundamental work \cite{VZ02,VZ03,PTW18}, in \cite{Den18b} we prove that the base $V$ in \cref{main} is both Brody hyperbolic and pseudo Kobayashi hyperbolic.  In \cite{LSZ19},  Lu-Sun-Zuo combine the original  approach in \cite{VZ03} with our Torelli type result \cref{Deng} to construct negatively curved pseudo hermitian metric on any algebraic curve in $V$, so that they can apply the celebrated work of Griffiths-King \cite{GK73} to prove the Borel hyperbolicity of $V$. 
\end{rem}
\subsection{Big Picard theorem for bases of log Calabi-Yau families}
In \cite{Den19}, we prove that for maximally varying, log smooth family of Calabi-Yau pairs, its base can be equipped with a Viehweg-Zuo Higgs bundle. Let us start with the following definition. 
\begin{dfn}[log smooth family]\label{def:log smooth}
	Let $X^\circ$ and $Y^\circ$ be   quasi-projective manifolds, and let $D^\circ=\sum_{i=1}^{m}a_iD^\circ_i$ be a Kawamata log terminal (klt for short) $\mathbb{Q}$-divisor on $X^\circ$ with simple normal crossing support. The  morphism $f^\circ:(X^\circ,D^\circ)\to Y^\circ$ is a \emph{log smooth family} if $f^\circ:X^\circ\to Y^\circ$ is a smooth projective morphism with connected fibers, and $D^\circ$ is \emph{relatively normal crossing over $Y^\circ$}, namely each stratum $D^\circ_{i_1}\cap \cdots\cap D^\circ_{i_k}$ of $D^\circ$ is dominant onto and smooth over $Y^\circ$ under $f^\circ$. 
\end{dfn}
Let us recall the main result in \cite{Den19}.
\begin{thm}[\protecting{\cite[Theorem A]{Den19}}] \label{thm:CYmain}
	Let $f^\circ:(X^\circ,D^\circ)\to Y^\circ$ be a log smooth family  over a quasi-projective manifold $Y^\circ$. Assume that  each fiber $(X_y,D_y):=(f^\circ)^{-1}(y)$ is a klt pair, and  $K_{X_y}+D_y\equiv_{\mathbb{Q}}0$. Assume that  the  logarithmic Kodaira-Spencer map $\rho:T_{Y^\circ}\to R^1f^\circ_*\big(T_{X^\circ/Y^\circ}(-\log D^\circ)\big)$ is generically injective.   Then after replacing $Y^\circ$ by a birational model, there exists a Viehweg-Zuo  Higgs bundle over some   $Y^\circ$. \qed
\end{thm}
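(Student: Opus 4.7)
The plan is to construct the Viehweg--Zuo Higgs bundle by adapting the cyclic cover construction of Viehweg--Zuo \cite{VZ02,VZ03} to the log Calabi--Yau setting, with the klt boundary divisor $D^\circ$ playing the role that the semi-ample canonical class plays in the classical case. First I would compactify: replace $Y^\circ$ by a smooth projective $Y$ with SNC boundary $D_Y$, and extend $f^\circ$ to a morphism $f:(X,D+\Sigma)\to Y$ on a smooth projective $X$, where $\Sigma := f^{-1}(D_Y)_{\rm red}$ and $D+\Sigma$ is SNC, with $f$ log smooth over $Y^\circ$. This reduction uses a log resolution of the indeterminacies of the rational map, and possibly several birational modifications of $Y$ along the way.

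The key positivity input is the \emph{canonical bundle formula} of Fujino--Mori and Kawamata applied to the klt log Calabi--Yau fibration $f:(X,D)\to Y$: the numerical triviality $K_{X_y}+D_y\equiv_\mathbb{Q} 0$ on fibers yields a decomposition
\begin{equation*}
K_{X/Y}+D\sim_\mathbb{Q} f^*L+\mathfrak{B}
\end{equation*}
where $L$ is the moduli $\mathbb{Q}$-line bundle on $Y$ and $\mathfrak{B}$ is the discriminant part. By the semi-positivity theorems of Fujino--Fujisawa (or Ambro/Campana--Paun), $L$ is nef. The crucial step is to upgrade this to \emph{bigness}: using the generic injectivity of the logarithmic Kodaira--Spencer map, one invokes the variational positivity results (in the style of Viehweg's $Q_n$-conjecture, now available in the klt log setting through work of Campana--Paun and Taji) to conclude, after a further birational modification of $Y$, that $L$ is big with $\mathbf{B}_+(L)\subset D_Y\cup S$ for some divisor $S$. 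This produces the required ingredient \cref{VZ big} of \cref{def:VZ}.

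Next I would perform the cyclic cover. For $N$ sufficiently divisible, the effective divisor $N\mathfrak{B}$ defines a section of $\mathcal{O}_X\bigl(N(K_{X/Y}+D)\bigr)\otimes f^*L^{-N}$, giving a cyclic cover $\pi:Z\to X$; resolve singularities to $\tilde Z$ and form the composition $h:\tilde Z\to Y$. Over a dense open of $Y$ the morphism $h$ is smooth and carries a polarized VHS; Deligne's lower canonical extension gives a Hodge bundle $(\mathcal{E},\theta)=\bigoplus E^{n-q,q}$ on $Y$ with log poles along $D_Y+S$. Set $(\tilde{\mathcal{E}},\tilde\theta):=(L^{-1}\otimes\mathcal{E},\mathbb{1}\otimes\theta)$. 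The sub-Higgs sheaf $(\mathcal{F},\eta)$ is then constructed from the iterated logarithmic Kodaira--Spencer maps of the \emph{original} family $f$,
\begin{equation*}
\mathrm{Sym}^k T_Y(-\log D_Y)\longrightarrow R^kf_*\!\bigwedge{}^{\!k}T_{X/Y}(-\log D^\circ),
\end{equation*}
composed with the duality $\bigwedge^{k}T_{X/Y}(-\log D^\circ)\cong \Omega^{n-k}_{X/Y}(\log D^\circ)\otimes (K_{X/Y}+D)^{-1}$ supplied by the relative log Calabi--Yau structure. Because the KS maps are defined for the log pair $(X,D^\circ)$ rather than for the cover, the resulting $\eta$ acquires log poles only along $D_Y$ as required. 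Finally, the tautological section $1\in H^0(X_y,\mathcal{O}_{X_y})\cong H^0(X_y,\mathcal{O}(N(K_{X_y}+D_y)))$, seen through the cyclic cover and the twist by $L^{-N}$, descends to a non-zero morphism $\mathcal{O}_Y\to\mathcal{F}_0$, giving the effectivity \cref{contain trivial}.

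The main obstacle, in my view, is twofold. First, establishing the bigness of the moduli part $L$ under the sole assumption of generic injectivity of Kodaira--Spencer requires a careful combination of the klt canonical bundle formula with variational positivity, which in the log Calabi--Yau case is subtler than in the $K_X$-semi-ample case because there is no ample/semi-ample direct image to start from. Second, identifying the sub-Higgs system $(\mathcal{F},\eta)$ as a \emph{sub-object} of $(\tilde{\mathcal{E}},\tilde\theta)$ with log poles only on $D_Y$ (and not on $D_Y+S$) requires tracing the log Calabi--Yau identification through the cyclic cover and verifying compatibility with Deligne's canonical extension; this is where the klt hypothesis on $D^\circ$ is indispensable, since it guarantees the cover has only canonical (indeed klt) singularities and its resolution has Hodge bundles well-behaved enough for the matching to go through.
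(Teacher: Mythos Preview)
The paper does not prove this theorem. \Cref{thm:CYmain} is stated as a quotation of \cite[Theorem~A]{Den19} and is immediately closed with a \qed; the present paper uses it as a black box to deduce \cref{thm:CY}. There is therefore no proof here to compare your proposal against.

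That said, your outline is broadly in the spirit of how Viehweg--Zuo Higgs bundles are produced in the literature: compactify to a log pair, use a positivity theorem to obtain a big and nef line bundle $L$ on the base, take a cyclic cover governed by a section of a power of the relative (log) canonical bundle, extract the Hodge bundle of the resulting new family as $(\sE,\theta)$, and build the sub-Higgs sheaf $(\sF,\eta)$ from the Kodaira--Spencer maps of the original family so that the log poles of $\eta$ lie only on the boundary $D_Y$. Two cautions. First, in the log Calabi--Yau setting the ``positivity input'' is more delicate than your sketch suggests: one does not simply read off bigness of the moduli part from the canonical bundle formula plus generic injectivity of Kodaira--Spencer; the argument in \cite{Den19} goes through a comparison of Higgs bundles and a Griffiths-type curvature/positivity argument for Hodge metrics (in the vein of \cite{VZ03}) rather than a direct appeal to Campana--P\u{a}un/Taji-style results, and this is exactly where the hypothesis on $\rho$ enters. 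Second, your identification of $\sF_0$ via ``the tautological section $1$'' is morally right but the actual matching of $(\sF,\eta)$ inside $(\tilde{\sE},\tilde{\theta})$ requires constructing a morphism of Higgs sheaves from the logarithmic de Rham complex of the original family to that of the cyclic cover and checking compatibility with the canonical extensions; this is where the bulk of the work in \cite{Den19} lies, and your sketch underestimates it.

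If your goal is only to understand the present paper, you may take \cref{thm:CYmain} as input; if you want to reconstruct its proof, consult \cite{Den19} directly.
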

By \cref{thm:CYmain}, one   can  perform the same proof as that of \cref{main} to conclude the following result. 
\begin{thm} \label{thm:CY}
	In the setting of \cref{thm:CYmain}, assume additionally that the logarithmic Kodaira-Spencer map 
		$$
		T_{Y^\circ,y}\rightarrow  H^1 \big(X_y,T_{X_y}(-\log D_y)\big)
		$$
		is injective for any $y\in Y^\circ$. Then for any projective compactification $Y$ of the base $Y^\circ$,  any holomorphic map $\gamma:\Delta^*\to Y^\circ$ extends into the origin. \qed
\end{thm}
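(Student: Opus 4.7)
The plan is to mirror the proof of Theorem \ref{main}, replacing the appeal to the Viehweg--Zuo construction for polarized families by \cref{thm:CYmain}. First, I would let $Z\subset Y$ denote the Zariski closure of $\gamma(\Delta^*)$ and take an embedded desingularization $\mu:\tilde{Y}\to Y$ so that the strict transform $\tilde{Z}\subset\tilde{Y}$ is smooth. Setting $\tilde{Z}^\circ:=\tilde{Z}\cap\mu^{-1}(Y^\circ)$, the composition $\iota:\tilde{Z}^\circ\hookrightarrow\tilde{Z}\xrightarrow{\mu|_{\tilde{Z}}}Z\hookrightarrow Y^\circ$ is birational onto $Z\cap Y^\circ$; in particular $d\iota$ is generically injective on $\tilde{Z}^\circ$.

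Next, I would pull back the log smooth family $f^\circ:(X^\circ,D^\circ)\to Y^\circ$ along $\iota$ to obtain a log smooth family $(f_{\tilde{Z}^\circ},\iota^*D^\circ)\to\tilde{Z}^\circ$ whose fibers remain klt with $\mathbb{Q}$-trivial log canonical class. The Kodaira--Spencer map for this pulled-back family at a point $\tilde{z}\in\tilde{Z}^\circ$ factors as
\[
T_{\tilde{Z}^\circ,\tilde{z}}\xrightarrow{d\iota}T_{Y^\circ,\iota(\tilde{z})}\xrightarrow{\rho}H^1\bigl(X_{\iota(\tilde{z})},T_{X_{\iota(\tilde{z})}}(-\log D_{\iota(\tilde{z})})\bigr),
\]
where the second arrow is injective by hypothesis and the first is injective on a dense Zariski open subset. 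Hence the Kodaira--Spencer map of the pulled-back family is generically injective, which is precisely the hypothesis of \cref{thm:CYmain}. Applying that theorem, after a birational modification $\nu:W\to\tilde{Z}$ there exists a Viehweg--Zuo Higgs bundle over $W^\circ:=\nu^{-1}(\tilde{Z}^\circ)$.

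I would then invoke \cref{thm:uniform} to obtain a Finsler metric on $T_W(-\log(W\setminus W^\circ))$ positively definite on some dense Zariski open $W'\subset W^\circ$ with the curvature estimate \eqref{eq:estimatefinal}. Since $\gamma:\Delta^*\to Z$ is Zariski dense in $Z$, it misses the discriminant locus of the birational map $\mu|_{\tilde{Z}}\circ\nu:W\to Z$, so the lift $\tilde{\gamma}:=(\mu|_{\tilde{Z}}\circ\nu)^{-1}\circ\gamma:\Delta^*\to W$ exists and maps into $W^\circ$. Zariski density of $\tilde{\gamma}$ forces $\tilde{\gamma}(\Delta^*)\cap W'\neq\varnothing$, so \cref{thm:Big Picard} produces a holomorphic extension $\overline{\tilde{\gamma}}:\Delta\to W$. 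Pushing down via $\mu\circ\nu\circ\overline{\tilde{\gamma}}:\Delta\to Y$ gives the desired extension of $\gamma$.

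The main obstacle that deserves careful verification is the hypothesis-check for \cref{thm:CYmain} after base change, i.e.\ ensuring generic injectivity of the Kodaira--Spencer map on $\tilde{Z}^\circ$. The pointwise injectivity assumption on $Y^\circ$ is exactly what makes this step go through for every such subvariety $\tilde{Z}$, irrespective of how $\gamma$ (and hence $Z$) is chosen; this is the analogue of the quasi-finiteness of the moduli map used in \cref{main}, and without it one could not guarantee that the pulled-back family has maximal variation.
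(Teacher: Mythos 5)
Your proposal is correct and follows the same route as the paper, which simply remarks that the proof of \cref{main} carries over verbatim once \cref{thm:CYmain} replaces the Viehweg--Zuo construction for polarized families. You have in addition spelled out the one step the paper leaves implicit, namely that the pointwise injectivity hypothesis on the logarithmic Kodaira--Spencer map is exactly what guarantees generic injectivity of the Kodaira--Spencer map for the pulled-back family over $\tilde{Z}^\circ$ (via the factorization through $d\iota$), so that \cref{thm:CYmain} indeed applies to the restricted family; this is the correct analogue of quasi-finiteness of the moduli map in \cref{main}.
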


\bibliographystyle{amsalpha}
\bibliography{biblio}

\end{document}